\documentclass{amsart}
\usepackage{mathtools, microtype, mathrsfs, xfrac, hyperref, amssymb, enumerate, xspace}
\usepackage[utf8]{inputenc}
\usepackage{tikz-cd}
\usepackage{todonotes}



\numberwithin{equation}{section}

\setcounter{tocdepth}{1}

\numberwithin{subsection}{section}

\allowdisplaybreaks[1]


\newtheorem*{namedtheorem}{\theoremname}
\newcommand{\theoremname}{testing}

\newtheorem{theorem}{Theorem}
\newtheorem{proposition}[theorem]{Proposition}
\newtheorem{proposition-definition}[theorem]
{Proposition-Definition}
\newtheorem{corollary}[theorem]{Corollary}
\newtheorem{lemma}[theorem]{Lemma}

\theoremstyle{definition}

\theoremstyle{remark}



\renewcommand{\mathcal}{\mathscr}

 \newcommand\cB{\mathcal{B}}
 \newcommand\cD{\mathcal{D}}
 
\newcommand\cG{\mathcal{G}}

 \newcommand\cP{\mathcal{P}}

\newcommand\GG{\mathbb{G}}

 \newcommand\PP{\mathbb{P}}

 \newcommand\ZZ{\mathbb{Z}}

 \newcommand\bD{\mathbf{D}}

 \newcommand\bP{\mathbf{P}}


\newcommand\arr{\ifinner\to\else\longrightarrow\fi}

\newcommand\arrto{\ifinner\mapsto\else\longmapsto\fi}

\renewcommand\H{\operatorname{H}}

\def\displaytimes_#1{\mathrel{\mathop{\times}\limits_{#1}}}

\def\displayotimes_#1{\mathrel{\mathop{\bigotimes}\limits_{#1}}}

\newcommand\aut{\operatorname{Aut}}

\newcommand\spec{\operatorname{Spec}}

\newcommand\id{\mathrm{id}}


\newcommand{\underaut}{\mathop{\underline{\mathrm{Aut}}}\nolimits}

\newlength{\ignora}

\renewcommand{\setminus}{\smallsetminus}

\newcommand{\PGL}{\mathrm{PGL}}

\DeclareFontFamily{U}{mathx}{\hyphenchar\font45}
\DeclareFontShape{U}{mathx}{m}{n}{
      <5> <6> <7> <8> <9> <10>
      <10.95> <12> <14.4> <17.28> <20.74> <24.88>
      mathx10
      }{}
\DeclareSymbolFont{mathx}{U}{mathx}{m}{n}
\DeclareFontSubstitution{U}{mathx}{m}{n}
\DeclareMathAccent{\widecheck}{0}{mathx}{"71}
\DeclareMathAccent{\wideparen}{0}{mathx}{"75}

\renewcommand{\epsilon}{\varepsilon}


\hyphenation{Gro-then-dieck}
\hyphenation{De-ligne}
\hyphenation{Mum-ford}

\newcommand{\on}{\operatorname}

\newcommand{\cha}{\operatorname{char}}
\newcommand{\schk}{\on{Sch}/k}
\newcommand{\sets}{\on{Sets}}
\newcommand{\br}{\on{Br}}


\title{The field of moduli of a divisor on a rational curve}

\author{Giulio Bresciani}

\address{Scuola Normale Superiore\\Piazza dei Cavalieri 7\\
56126 Pisa\\ Italy}
\email{giulio.bresciani@gmail.com}

\begin{document}

\begin{abstract}
	Let $k$ be a field with algebraic closure $\bar{k}$ and $D \subset \mathbb{P}^{1}_{\bar{k}}$ a reduced, effective divisor of degree $n \ge 3$, write $k(D)$ for the field of moduli of $D$. A. Marinatto proved that when $n$ is odd, or $n = 4$, $D$ descends to a divisor on $\mathbb{P}^{1}_{k(D)}$.
	
	We analyze completely the problem of when $D$ descends to a divisor on a smooth, projective curve of genus $0$ on $k(D)$, possibly with no rational points. In particular, we study the remaining cases $n \ge 6$ even, and we obtain conceptual proofs of Marinatto's results and of a theorem by B. Huggins about the field of moduli of hyperelliptic curves.
\end{abstract}

\maketitle


We work over a field $k$ with algebraic closure $\bar{k}$. A rational curve is a smooth, projective, geometrically connected curve of genus $0$.

\subsection*{The field of moduli} Let $D\subset \PP^{1}_{\bar{k}}$ be an effective reduced divisor. If $k$ is perfect, consider the subgroup $H\subset\operatorname{Gal}(\bar{k}/k)$ of elements $\sigma$ such that there exists an automorphism of $\PP^{1}_{\bar{k}}/\bar{k}$ mapping $D$ to $\sigma(D)$. The \emph{field of moduli} $k(D)$ of $D$ is the subfield of $\bar{k}$ of elements fixed by $H$. In our joint paper with A. Vistoli \cite{giulio-angelo-moduli} we have generalized the definition of the field of moduli to arbitrary $k$; we work in this generality.

If $\bar{k}/k'/k$ is a sub-extension and there exists a rational curve $P$ over $k'$ with a divisor $D_{0}\subset P$ such that $(P_{\bar{k}},D_{0,\bar{k}})\simeq(\PP^{1}_{\bar{k}},D)$, we say that $k'$ is a field of definition for $(\PP^{1}_{\bar{k}},D)$. The field of moduli is contained in every field of definition.

\subsection*{Marinatto's results} Let $n$ be the degree of $D$, assume $n\ge 3$. Assuming $k$ perfect of characteristic $\neq 2$, A. Marinatto \cite{marinatto} showed that, if $n$ is odd or $n=4$, then $D$ descends to a divisor of $\PP^{1}_{k(D)}$. Hence, if $n$ is odd or $n=4$, $k(D)$ is a field of definition for $(\PP^{1}_{\bar{k}},D)$.

For every even integer $n\ge 6$, Marinatto constructed examples of divisors $D\subset\PP^{1}_{\bar{k}}$ of degree $n$ which do not descend to $\PP^{1}_{k(D)}$. His examples descend to a non-trivial rational curve over $k(D)$, though, hence $k(D)$ is a field of definition for $(\PP^{1}_{\bar{k}},D)$ in these cases. 

\subsection*{Our results} Because of this, the problem of fields of definition versus fields of moduli for $n$ even and $\ge 6$ is still open. We solve it, and generalize Marinatto's results for $n$ odd and $n=4$ to an arbitrary base field. The following theorem sums up the various results we obtain.

\begin{theorem}\label{thm:main}
    Let $k$ be any field, $n\ge 3$ an integer and $D\subset \PP^{1}_{\bar{k}}$ a reduced, effective divisor of degree $n$. If at least one of the following conditions holds, then $(\PP^{1}_{\bar{k}},D)$ is defined over its field of moduli.
    \begin{itemize}
		    \item $\aut(\PP^{1},D)$ is not cyclic of degree even and prime with $\cha k$.
		    \item $n=4$.
		    \item $n=6$.
		    \item $n$ is odd.
	\end{itemize}
	In particular, the first condition implies that $(\PP^{1}_{\bar{k}},D)$ is always defined over its field of moduli if $\cha k=2$.
	
	If $n=4$ or $n$ is odd, we can say more precisely that $D$ descends to a divisor of $\PP^{1}$ over the field of moduli.
	
	On the other hand, if $\cha k\neq 2$, $n\ge 8$ is even and the $2$-torsion $\br(k)[2]$ of the Brauer group of $k$ is non-trivial, then we may choose $D$ such that the field of moduli is $k$ and $(\PP^{1}_{\bar{k}},D)$ is not defined over $k$.
\end{theorem}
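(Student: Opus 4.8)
The plan is to produce, for each even $n\ge 8$, a divisor $D$ of degree $n$ realizing the ``smallest bad'' automorphism group, namely $\aut(\PP^{1}_{\bar k},D)=C_{2}=\langle\iota\rangle$, with field of moduli $k$ and with nontrivial residual gerbe. Since $\cha k\neq 2$, the band of this gerbe is the constant group $\mmu_{2}$, and $\H^{2}(k,\mmu_{2})=\br(k)[2]$; as $(\PP^{1}_{\bar k},D)$ is defined over $k$ precisely when this gerbe is neutral, it suffices to realize a prescribed nonzero $\alpha\in\br(k)[2]$ as the gerbe class. I would fix $\alpha=(a,b)$ a nonzero quaternion class with $a,b\in k^{\times}$. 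Note this is the same mechanism underlying Huggins' hyperelliptic theorem, where the relevant involution is the one induced on the branch divisor.

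For the geometry I would take $\iota\colon x\mapsto a/x$, with lift $\left(\begin{smallmatrix}0&a\\1&0\end{smallmatrix}\right)\in\GL_{2}(k)$ squaring to $a$; this element is what injects the class of $a$, and its fixed locus is the conjugate pair $\{\pm\sqrt a\}$, a closed point with residue field $k(\sqrt a)$. I would then set $D=\{\pm\sqrt a\}+\sum_{i=1}^{r}O_{i}$, where the $O_{i}$ are $r=(n-2)/2\ge 3$ further $\iota$-orbits defined over the biquadratic field $k(\sqrt a,\sqrt b)$ and chosen in general position, but in a $\operatorname{Gal}(\bar k/k)$-symmetric way so that each $g\in\operatorname{Gal}(k(\sqrt a,\sqrt b)/k)$ carries $g^{*}D$ back to $D$ by an $f_{g}\in\PGL_{2}$. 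The constraint $r\ge 3$ (hence $n\ge 8$) is exactly what forces $\aut(\PP^{1}_{\bar k},D)=C_{2}$: with only two orbits one can always find an extra involution of $\PP^{1}$ swapping them (so the automorphism group becomes dihedral and the obstruction dies), which matches and re-proves the positive result at $n=6$.

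The computation of the class is the heart of the argument. Lifting the $f_{g}$ to $\GL_{2}$ turns the $\mmu_{2}$-valued $2$-cocycle $g,h\mapsto f_{g}\,{}^{g}\!f_{h}\,f_{gh}^{-1}$ into a $\gm$-valued one, and I would check that it represents the cup product $\chi_{a}\cup\chi_{b}$ of the two quadratic characters, i.e.\ the class $(a,b)=\alpha$. Equivalently, the two descent transformations anticommute modulo $C_{2}$ by exactly $\iota$ — the defining relation of the quaternion algebra $(a,b)$ — with one slot coming from $\iota^{2}\equiv a$ and the other from the gluing over $k(\sqrt b)$. Hence the gerbe class is $\alpha\neq 0$, so $(\PP^{1}_{\bar k},D)$ descends to no rational curve over $k$ while having field of moduli $k$.

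The main obstacle is the simultaneous control of three things that pull against one another: that $\aut(\PP^{1}_{\bar k},D)$ is exactly $C_{2}$ (no extra involution, no exceptional subgroup), that the field of moduli is exactly $k$ rather than a proper subextension, and that the cocycle is exactly $(a,b)$ rather than a class that secretly splits. (Indeed a naive gluing over a single quadratic field tends to force an auxiliary square condition such as $-ab\in k^{\times 2}$, which is why I would work over the biquadratic field and use the cup-product representative.) The tension is between general position, needed to kill extra automorphisms, and $\operatorname{Gal}$-stability, needed to keep the field of moduli equal to $k$; I would resolve it by taking the orbits to lie over a general $G$-symmetric reduced divisor downstairs on $\PP^{1}/\iota$ and by pinning the twist through the explicit $\GL_{2}$-lift, which reduces the cup-product identification to a direct matrix calculation.
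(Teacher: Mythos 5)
Your proposal addresses only the final assertion of the theorem (the counterexamples for even $n\ge 8$ when $\br(k)[2]\neq 0$). The four positive conditions --- $\aut(\PP^{1},D)$ not cyclic of even order prime to $\cha k$, $n=4$, $n=6$, $n$ odd --- and the refined statements that for $n=4$ and $n$ odd the divisor descends to a divisor of $\PP^{1}$ itself, are never proved. Your only gesture toward them is the remark that with two free orbits one finds an extra involution; that explains why your construction needs $r\ge 3$, but it is not a proof of the $n=6$ case (a degree-$6$ divisor need not contain the fixed points of the involution a priori, and nothing at all is said about non-cyclic automorphism groups, odd $n$, or characteristic $2$). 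In the paper these parts carry most of the weight: the Riemann--Hurwitz analysis of rational maps from a pointless conic to a finite étale gerbe (Proposition~\ref{prop:cyclicgerbe}), the neutrality criterion (Lemma~\ref{lem:BSgerbe}), the resulting characterization (Theorem~\ref{thm:divisors}), and the parity argument for odd $n$.

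There is also a concrete parity flaw in the construction itself, which kills it for half the degrees, including $n=8$. You always put the fixed pair $\{\pm\sqrt a\}$ into $D$ and add $r=(n-2)/2$ free $\iota$-orbits. Any $f_{g}$ with $f_{g}({}^{g}D)=D$ conjugates ${}^{g}\iota=\iota$ into $\aut(\PP^{1}_{\bar k},D)=\langle\iota\rangle$, hence commutes with $\iota$, hence preserves $\operatorname{Fix}(\iota)=\{\pm\sqrt a\}$ and therefore also the union of the $O_{i}$. Passing to the quotient $\PP^{1}/\iota$, the images of the $O_{i}$ give a descent-stable divisor of degree $r$ on the compression $\bP$. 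A non-trivial conic carries no rational divisor of odd degree, so whenever $r$ is odd --- i.e.\ $n\equiv 0\pmod 4$, in particular $n=8$, $r=3$ --- the compression is forced to be $\PP^{1}_{k}$; but then the rational map $\bP\dashrightarrow\cG$ from a trivial conic neutralizes the gerbe ($k$ is infinite since $\br(k)[2]\neq 0$), so your $D$ \emph{is} defined over $k$ and your cocycle class cannot equal $(a,b)\neq 0$. This is precisely why Lemma~\ref{lem:prep} of the paper puts the branch points inside the downstairs divisor only when $n\equiv 2\pmod 4$: the divisor on the conic must always have even degree. Finally, even in the cases $n\equiv 2\pmod 4$, the two steps you call the heart of the argument are asserted rather than carried out: that general position compatible with Galois-symmetry forces $\aut(\PP^{1}_{\bar k},D)=C_{2}$ exactly (the self-centralizing property of $\iota$ only excludes everything except $C_{2}$ and dihedral groups of twice-odd order, which is why the paper settles for the centralizer statement in Lemma~\ref{lem:prep} plus Lemma~\ref{lem:grp}), and that the resulting cocycle is $\chi_{a}\cup\chi_{b}$ (note also that lifting the $f_{g}$ to $\GL_{2}$ does not make the obstruction cocycle $\gm$-valued: the lift lands in $\gm\cup\gm\tilde\iota$, not in $\gm$). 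The paper sidesteps both issues by proving non-neutrality structurally --- the odd-degree map $P\to\bP$ makes $\bP$ non-trivial, and then Corollary~\ref{cor:cyclic} and Theorem~\ref{thm:divisors} conclude --- rather than by computing the class; if you want to keep the explicit cup-product route, you must supply both missing verifications and handle non-perfect $k$, for which cocycle language alone does not suffice.
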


For $n=4$ or $n$ odd Theorem~\ref{thm:main} is a generalization of Marinatto's theorem to arbitrary fields. For $n$ even and $\ge 6$, it is new. In order to prove all the various sub-statements of Theorem~\ref{thm:main}, we first characterize the divisors $D\subset\PP^{1}_{\bar{k}}$ such that the field of moduli is a field of definition for $(\PP^{1}_{\bar{k}},D)$, see Theorem~\ref{thm:divisors}. 

\subsection*{Discussion of proofs} Marinatto's proof for $n$ odd is based on a case-by-case analysis of the finite subgroups of $\PGL_{2}$. Our approach is more conceptual: for $n$ odd, we only use general techniques for problems about fields of moduli (not specific to divisors or rational curves) plus a parity counting. For $n$ even and $\ge 8$, we also need to make some elementary geometric arguments. If $\aut(\PP^{1},D)$ is not cyclic of degree even and prime with $\cha k$, we have a conceptual argument based on the Riemann-Hurwitz formula; from this case we get the cases $n=4,6$, too.

Marinatto also uses a theorem of Huggins \cite[Theorem 5.3]{huggins} about fields of moduli of hyperelliptic curves. Huggins' proof is based on a case-by-case analysis of the finite subgroups of $\PGL_{2}$, too. Our argument with Riemann-Hurwitz directly yields a short, conceptual proof of Huggins' result; we include it at the end of the paper. Our version is slightly more general than the original one since we do not assume the base field to be perfect.

While we use the general framework constructed in \cite{giulio-angelo-moduli}, we mention the fact that, in dimension $1$ (which is our case), the main ideas are due to P. Dèbes and M. Emsalem \cite{debes-emsalem}.

\subsection*{Acknowledgements}

This paper was born as part of my recent joint articles with A. Vistoli \cite{giulio-angelo-valuative} and \cite{giulio-angelo-moduli}. I am grateful to him for many useful discussions, as well as for pointing out Marinatto's article to me.

\section{The residual gerbe and the compression}

For the convenience of the reader, we recall some objects and constructions from our joint paper with A. Vistoli \cite{giulio-angelo-moduli}. 

Consider the functor $\cG:\schk\to\sets$ sending a scheme $S$ over $k$ to the set of projective bundles $Q\to S$ of relative dimension $1$ with an effective divisor $E\subset Q$ finite étale over $S$ such that each geometric fiber is isomorphic to $(\PP^{1}_{\bar{k}},D)$, possibly after enlarging the base field. It can be checked that $\cG$ is an algebraic stack of finite type over $k$ with finite, étale inertia.

The algebraic stack $\cG$ is a gerbe over a field $k(D)$ finite over $k$ which is, by definition, the field of moduli of $(\PP^{1}_{\bar{k}},D)$, and the base change $\cG\times_{k(D)}\bar{k}$ is isomorphic to the classifying stack $\cB_{\bar{k}}\aut(\PP^{1}_{\bar{k}},D)$ \cite[Proposition 3.10]{giulio-angelo-moduli}. It is called the \emph{residual gerbe} of $(\PP^{1}_{\bar{k}},D)$. By definition, $(\PP^{1}_{\bar{k}},D)$ is defined over $k(D)$ if and only if the residual gerbe $\cG$ has a $k(D)$-rational point. If $k$ is finite then this is always true \cite[Theorem 8.1]{diproietto-tonini-zhang}.

There is a universal projective bundle $\cP\to\cG$ over $\cG$ with a universal divisor $\cD\subset\cP$ finite étale over $\cG$. The coarse moduli space $P$ of $\cP$ is called the \emph{compression} of $(\PP^{1}_{\bar{k}},D)$, it is a rational curve over $k(D)$ whose pullback to $\bar{k}$ identifies naturally with $\PP^{1}_{\bar{k}}/\aut(\PP^{1}_{\bar{k}},D)$ \cite[\S 5]{giulio-angelo-moduli}. Let us call \emph{compressed divisor} the coarse moduli space $\bD\subset\bP$ of $\cD$, since $\cD$ is finite étale over $\cG$ then $\bD$ is finite étale over $\spec k$.

If $k$ is perfect, we can give a more down-to-earth construction of the compression: $\on{Gal}(\bar{k}/k(D))$ acts naturally on $\PP^{1}_{\bar{k}}/\aut(\PP^{1}_{\bar{k}},D)$ which thus descends to a rational curve $\bP$ over $k(D)$, and similarly $D/\aut(\PP^{1}_{\bar{k}},D)$ descends to a divisor $\bD\subset\bP$.

There is a rational map $P\dashrightarrow \cG$ whose pullback $P_{\bar{k}}=\PP^{1}_{\bar{k}}/\aut(\PP^{1},D)\dashrightarrow \cG_{\bar{k}}=\cB_{\bar{k}}\aut(\PP^{1}_{\bar{k}},D)$ to $\bar{k}$ is associated with the projection $\PP^{1}_{\bar{k}}\to\PP^{1}_{\bar{k}}/\aut(\PP^{1}_{\bar{k}},D)$, see the proof of \cite[Theorem 5.4]{giulio-angelo-moduli}. In particular, if $P\simeq \PP^{1}_{k(D)}$ then $\cG$ has a $k(D)$-rational point.

\section{Rational maps from rational curves to gerbes}

As we have seen above, rational maps $P\dashrightarrow \cG$ where $P$ is a rational curve, $\cG$ is a finite étale gerbe and such that the geometric fibers are connected of genus $0$ play a crucial role in our problem. In this section, we study such maps.

\begin{proposition}\label{prop:cyclicgerbe}
	Let $k$ be a field, $P$ a rational curve with $P(k)=\emptyset$, $\Phi$ a finite, étale gerbe with a rational map $P\dashrightarrow\Phi$. Assume that the geometric fibers of $P\dashrightarrow\Phi$ are irreducible curves of genus $0$.
	
	The gerbe $\Phi$ is abelian with cyclic band of order prime to $\operatorname{char}k$, and there exists a separable extension $k'/k$ of degree $2$ and a point $p\in P(k')$ such that $P\dashrightarrow\Phi$ restricts to a morphism $P\setminus\{p\}\to \Phi$.
\end{proposition}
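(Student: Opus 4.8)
The plan is to pass to $\bar{k}$, reinterpret the rational map as a branched Galois cover of $\PP^{1}$, pin down its ramification by Riemann–Hurwitz together with the classification of finite subgroups of $\PGL_{2}$, and then feed in the anisotropy of the conic $P$ to eliminate every non-cyclic band and to control the branch locus.

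First I would unwind the picture over $\bar{k}$. Since $\Phi$ is a finite étale gerbe, $\Phi_{\bar{k}}\simeq\cB_{\bar{k}}G$ for a finite group $G$ of order prime to $\cha k$. On its locus of definition, a map $P_{\bar{k}}\dashrightarrow\cB_{\bar{k}}G$ is the same as a $G$-torsor, and taking the normalization of $P_{\bar{k}}$ in it yields a $G$-cover $\pi\colon C\to P_{\bar{k}}$ whose generic fiber is the geometric fiber of $P\dashrightarrow\Phi$. The hypothesis that this fiber is irreducible of genus $0$ says precisely that $C\simeq\PP^{1}_{\bar{k}}$ and that $G$ acts faithfully and transitively, i.e.\ $G\hookrightarrow\PGL_{2}(\bar{k})$ is a finite subgroup with $C/G=P_{\bar{k}}\simeq\PP^{1}$. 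Thus $G$ lies on the tame list: cyclic, dihedral, $A_{4}$, $S_{4}$ or $A_{5}$ (the same list in every characteristic once $|G|$ is prime to $\cha k$). As $\pi$ is tamely ramified, Riemann–Hurwitz fixes the branch data case by case: two totally ramified points of index $(n,n)$ for $C_{n}$, and three branch points of indices $(2,2,m)$, $(2,3,3)$, $(2,3,4)$, $(2,3,5)$ for $D_{m}$, $A_{4}$, $S_{4}$, $A_{5}$.

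Next I would descend the branch locus. It is a reduced effective divisor on $P$ defined over $k$, and $\on{Gal}(\bar{k}/k)$ permutes the geometric branch points while preserving their ramification indices. Here the arithmetic of $P$ enters: being a genus $0$ curve with $P(k)=\emptyset$, $P$ is an anisotropic conic, so by Springer's theorem it has no closed point of odd degree; in particular no geometric branch point is $\on{Gal}$-fixed. In the cases $A_{4}$, $S_{4}$, $A_{5}$ and $D_{m}$ with $m\ge 3$ there is a branch point whose ramification index differs from all the others, forcing it to be Galois-invariant and hence $k$-rational — impossible. This leaves only $D_{2}$ and the cyclic groups. The delicate case, and the main obstacle, is the Klein four-group $D_{2}$, where all three branch points share the index $2$ so that no distinguished point is available; here I would argue by parity, observing that a fixed-point-free action of $\on{Gal}(\bar{k}/k)$ on a $3$-element set is necessarily transitive, whence the three branch points constitute a single closed point of degree $3$ — an odd degree again excluded by Springer. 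Therefore $G$ is cyclic of order $n$ prime to $\cha k$, and $\Phi$ is the associated abelian gerbe with cyclic band.

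Finally, for the second assertion, with $G=C_{n}$ the indeterminacy locus of $P\dashrightarrow\Phi$ is exactly the pair of totally ramified points $q_{1},q_{2}$: at a point where $\pi$ is unramified the torsor extends and the map to the étale gerbe is defined, whereas at a totally ramified point the Kummer monodromy obstructs extension. Since neither $q_{i}$ is $k$-rational, they are $\on{Gal}$-conjugate and form a single reduced closed point $p$ of degree $2$; reducedness of $\{q_{1},q_{2}\}$ makes it étale over $k$, so its residue field $k'$ is a separable quadratic extension. As the locus of definition is intrinsic to the $k$-morphism and becomes $P_{\bar{k}}\setminus\{q_{1},q_{2}\}$ after base change, it equals $P\setminus\{p\}$, and $P\dashrightarrow\Phi$ restricts to a morphism there, with $p\in P(k')$ as required.
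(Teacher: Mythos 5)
There is a genuine gap, and it sits at the very first step: you assert that because $\Phi$ is a finite \emph{étale} gerbe, $\Phi_{\bar{k}}\simeq\cB_{\bar{k}}G$ with $|G|$ prime to $\cha k$. This is false: étale finite group schemes can perfectly well have order divisible by the characteristic (any constant group scheme, e.g.\ $\ZZ/p$ in characteristic $p$, is étale; only twisted forms like $\mu_{p}$ fail to be). Indeed, in the paper's intended application $G=\aut(\PP^{1}_{\bar{k}},D)$, which is étale because a vector field on $\PP^{1}$ vanishing at $\ge 3$ points is zero, yet can contain $x\mapsto x+1$ in characteristic $p$. The statement that the band has order prime to $\cha k$ is part of the \emph{conclusion} of the proposition, not a hypothesis you may import. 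Everything downstream in your argument depends on it: the restriction to the ``tame list'' cyclic, dihedral, $A_{4}$, $S_{4}$, $A_{5}$ is only valid for subgroups of $\PGL_{2}(\bar{k})$ of order prime to $p$; otherwise the classification includes $\PSL_{2}(\FF_{q})$, $\PGL_{2}(\FF_{q})$, elementary abelian $p$-groups and their extensions, and the ramification analysis acquires wild terms, so Riemann--Hurwitz no longer pins down the branch data as you claim. The paper closes exactly this gap by a direct Riemann--Hurwitz estimate with the ramification divisor ($d_{r}\ge e_{r}$ at wildly ramified points), showing wild ramification forces $2n-2\ge 2n$, before any structure of $G$ is invoked.

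A second, smaller defect: your Galois-theoretic bookkeeping implicitly assumes $k$ perfect, while the proposition allows arbitrary fields. Over an imperfect field, a $\operatorname{Gal}(\bar{k}/k)$-fixed geometric branch point need not be $k$-rational; it can be a purely inseparable closed point, whose degree is a power of $p$ and hence, in characteristic $2$, is \emph{even} and not excluded by the anisotropy/index argument. This threatens both your exclusion of the non-cyclic groups (``forcing it to be Galois-invariant and hence $k$-rational'') and, more seriously, your final claim that the two totally ramified points form a \emph{separable} quadratic point $p$: a priori they could collapse to a single inseparable point of degree $2$ in characteristic $2$. The paper treats this case explicitly (the branch locus consisting of one point $z$ with $[k^{s}(z):k^{s}]=2$) and kills it with an extra Riemann--Hurwitz computation over $\bar{k}$; your proof needs an analogous step, or at least a reduction to $k^{s}$ with careful tracking of residue degrees. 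Apart from these gaps, your route --- classification of finite subgroups of $\PGL_{2}$ plus case-by-case branch data plus Springer --- is the style of argument (Marinatto, Huggins) that the paper deliberately replaces by a uniform counting; it could be made to work over perfect fields once tameness is established, but as written it assumes its two hardest points.
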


Let us first sketch the proof in the particular case in which $k$ is perfect. Write $\Phi_{\bar{k}}=\cB_{\bar{k}}G$ for some finite group $G$, the base change of $P\dashrightarrow\Phi$ to $\bar{k}$ corresponds to a $G$-covering $\PP^{1}_{\bar{k}}\to\PP^{1}_{\bar{k}}$. The branch locus of the $G$-covering descends to $P$: if $P(k)=\emptyset$, this forces every branch point $p$ of $\PP^{1}_{\bar{k}}$ to have at least one Galois conjugate $\bar{p}$ such that the ramification data over $p$ and $\bar{p}$ are equal. Using the Riemann-Hurwitz formula, it can be easily checked that this only happens for $G$ cyclic. 

\begin{proof}
	Let $\bar{k}$ be an algebraic closure and $\bar{k}\supseteq k^{s}\supseteq k$ be the separable closure, we have $P_{k^{s}}\simeq\PP^{1}_{k^{s}}$, in particular $\Phi_{k^{s}}$ is neutral since there is a rational map $P\dashrightarrow\Phi$. Choose a section $\spec k^{s}\to \Phi$ and let $G$ be its automorphism group, it is a finite constant group and there is an induced branched $G$-covering $f:\PP^{1}_{k^{s}}\to P_{k^{s}}\simeq \PP^{1}_{k^{s}}$.
	
	Let $R$ be the ramification divisor of $f$ as in \cite[\href{https://stacks.math.columbia.edu/tag/0C1B}{Tag 0C1B}]{stacks-project}, we think of $R$ as a non-reduced, finite scheme over $k^{s}$. There is an action of $G$ on $R$, in particular on the underlying set. Given a point $r\in R$ denote by $d_{r}$ the length of $R$ at $r$, $e_{r}$ the ramification index, $o_{r}$ the cardinality of the set-theoretic $G$-orbit of $r$. Since $f$ has degree $n$, then $n=o_{r}e_{r}[k^{s}(r):k^{s}(f(r))]$. By Riemann-Hurwitz \cite[\href{https://stacks.math.columbia.edu/tag/0C1F}{Lemma 0C1F}]{stacks-project} we have
	\[2n-2=\sum_{r\in R}d_{r}[k^{s}(r):k^{s}]\]
	with $d_{r}=e_{r}-1$ if $f$ is tamely ramified at $r$ and $d_{r}\ge e_{r}$ otherwise.
	
	There exists a largest open subset $U\subset P$ such that $U\to\Phi$ is defined \cite[Corollary A.2]{giulio-ed}. Write $Z=P\setminus U$, then $Z_{k^{s}}$ is the branch locus of $f$ \cite[Lemma A.1, Corollary A.2.iv]{giulio-ed}, in particular $f(R)=Z_{k^{s}}$ set-theoretically. Since $P$ is a non-trivial Brauer-Severi variety, $Z$ has no rational points. This implies that, if $r\in R$ is a point such that $k^{s}(f(r))=k^{s}$, there exists a point $r'\in R$ such that $f(r')\neq f(r)$ is a Galois conjugate of $f(r)$, and hence $o_{r}=o_{r'}$, $d_{r}=d_{r'}$, $e_{r}=e_{r'}$, $k^{s}(r)\simeq k^{s}(r')$.
	
	Let us show first that $f$ is tamely ramified at every point. Assume that $f$ has wild ramification at some point $r$, in particular $d_{r}\ge e_{r}$. If $k^{s}(f(r))=k^{s}$, by what we have said above we have
	\[2n-2\ge 2o_{r}e_{r}[k^{s}(r):k^{s}]=2n,\]
	which is absurd. If $k^{s}(f(r))\neq k^{s}$, then
	\[2n-2\ge o_{r}e_{r}[k^{s}(r):k^{s}]\ge 2o_{r}e_{r}[k^{s}(r):k^{s}(f(r))]=2n,\]
	hence we get a contradiction in this case, too.
	
	We may thus assume that $f$ has tame ramification: in particular, $k^{s}(r)=k^{s}(f(r))$ for every $r$, $n=o_{r}e_{r}$ and $d_{r}=e_{r}-1$ for every $r\in R$. Since $f$ is ramified at every $r\in R$, then $e_{r}\ge 2$ and $o_{r}\le n/2$. For every $z\in Z_{k^{s}}$, write $e_{z}$, $o_{z}$ for $e_{r}$, $o_{r}$, where $r\in R$ is some point with $f(r)=z$. We have
	\[2n-2=\sum_{r\in R}(e_{r}-1)[k^{s}(r):k^{s}]=\sum_{z\in Z_{k^{s}}}o_{z}(e_{z}-1)[k^{s}(z):k^{s}]=\]
	\[=\sum_{z\in Z_{k^{s}}}(n-o_{z})[k^{s}(z):k^{s}]\ge \deg Z\cdot n/2\]
	which implies $\deg Z\le 3$. Since $P$ is a non-trivial Brauer-Severi variety of dimension $1$, then $\deg Z$ is even, hence we may assume $\deg Z=2$ (if $\deg Z=0$, then $G$ is trivial).
	
	Assume first $Z_{k^{s}}$ contains only one point $z$ with $[k^{s}(z): k^{s}]=2$, in particular $\operatorname{char}k=2$. Then
	\[2n-2=o_{z}(e_{z}-1)[k^{s}(z):k^{s}]=2n-2o_{z},\]
	hence $o_{z}=1$ and $n=e_{z}$ is prime with $\cha k=2$, i.e. it is odd. Since $n$ is odd, the base change $f_{\bar{k}}$ of $f$ to $\bar{k}$ is tamely ramified and since $f$ has only one point then the same is true for $f_{\bar{k}}$, and the ramification index must be $n$. By Riemann-Hurwitz applied to $f_{\bar{k}}$, we have $2n-2=n-1$, i.e. $n=1$ which is in contradiction with $\deg Z=2$.
	
	If $Z_{k^{s}}$ contains two $k^{s}$-rational points, the Galois action swaps them because $Z$ has no rational points, hence
	\[2n-2=2o_{z}(e_{z}-1)=2n-2o_{z},\]
	which implies $o_{z}=1$ and $n=e_{z}$ is prime to $\operatorname{char}k$. If $r\in R$ is one of the ramification points, since $|G|=n=e_{z}$ then $r$ is fixed by $G$. Since $|G|=n$ is prime with $\cha k$, then $G$ is linearly reductive: this implies that $G$ acts faithfully on the tangent space of $r$ and is thus cyclic.
\end{proof}

\begin{corollary}\label{cor:cyclic}
	Let $k$ be a field and $D\subset \PP^{1}_{\bar{k}}$ an effective, reduced divisor of degree $n\ge 3$ defined over $\bar{k}$. If the compression of $(\PP^{1}_{\bar{k}},D)$ is not isomorphic to $\PP^{1}$ over the field of moduli, then $\aut(\PP^{1}_{\bar{k}},D)$ is cyclic of order prime to the characteristic.
\end{corollary}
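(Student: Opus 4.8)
The plan is to realize this as essentially a one-step application of Proposition~\ref{prop:cyclicgerbe}, with the base field taken to be the field of moduli $F=k(D)$; all the work is in matching up the hypotheses. Write $P$ for the compression and $\cG$ for the residual gerbe, both defined over $F$. From Section~1 I would recall three facts: that $\cG$ is a finite étale gerbe (finite because, for $n\ge 3$, an automorphism of $\PP^{1}_{\bar{k}}$ preserving the support of $D$ is pinned down by its action on at least three points, so $\aut(\PP^{1}_{\bar{k}},D)$ is finite); that $\cG\times_{F}\bar{k}\simeq\cB_{\bar{k}}\aut(\PP^{1}_{\bar{k}},D)$; and that there is a rational map $P\dashrightarrow\cG$ whose base change to $\bar{k}$ is the quotient projection $\PP^{1}_{\bar{k}}\to\PP^{1}_{\bar{k}}/\aut(\PP^{1}_{\bar{k}},D)$.

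Next I would translate the two hypotheses of the proposition. Since $P$ is a rational curve over $F$, it is a one-dimensional Brauer-Severi variety, hence isomorphic to $\PP^{1}_{F}$ exactly when $P(F)\neq\emptyset$; therefore the assumption that the compression is not isomorphic to $\PP^{1}$ over $F$ is precisely the condition $P(F)=\emptyset$ required by Proposition~\ref{prop:cyclicgerbe}. For the fiber hypothesis, the geometric fiber of $P\dashrightarrow\cG$ over a geometric point $\spec\bar{k}\to\cG$ is, by the description above, the quotient cover $\PP^{1}_{\bar{k}}\to\PP^{1}_{\bar{k}}/\aut(\PP^{1}_{\bar{k}},D)$, whose total space is an open subset of $\PP^{1}_{\bar{k}}$ and so an irreducible curve of genus $0$. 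With both hypotheses in hand, the proposition gives that $\cG$ is abelian with cyclic band of order prime to $\cha F=\cha k$. Since cyclicity and order of the band can be read off after base change to $\bar{k}$, where the band is the constant group $\aut(\PP^{1}_{\bar{k}},D)$, I conclude that $\aut(\PP^{1}_{\bar{k}},D)$ is cyclic of order prime to the characteristic.

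I expect the only delicate points to be the two identifications, rather than any new computation: recognizing the geometric fiber of the map to the gerbe as the quotient cover $\PP^{1}_{\bar{k}}\to\PP^{1}_{\bar{k}}/\aut(\PP^{1}_{\bar{k}},D)$, so that the ``irreducible genus $0$ fiber'' hypothesis is manifestly satisfied, and matching the band of $\cG$ with $\aut(\PP^{1}_{\bar{k}},D)$. Both are already packaged into the constructions of the residual gerbe and the compression recalled in Section~1, so once these translations are made the corollary follows immediately from Proposition~\ref{prop:cyclicgerbe}.
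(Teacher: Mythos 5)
Your proposal is correct and is exactly the argument the paper intends: the corollary is stated without proof precisely because it follows from applying Proposition~\ref{prop:cyclicgerbe} to the compression $P$ and the residual gerbe $\cG$ over the field of moduli, using the facts recalled in Section~1 (the rational map $P\dashrightarrow\cG$ restricting over $\bar{k}$ to $\PP^{1}_{\bar{k}}\to\PP^{1}_{\bar{k}}/\aut(\PP^{1}_{\bar{k}},D)$, and $\cG_{\bar{k}}\simeq\cB_{\bar{k}}\aut(\PP^{1}_{\bar{k}},D)$). Your two ``delicate points'' --- identifying the geometric fibers and matching the band with $\aut(\PP^{1}_{\bar{k}},D)$ --- are indeed the only things to check, and you resolve both correctly.
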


\begin{lemma}\label{lem:BSgerbe}
	Let $P\dashrightarrow \Phi$ be as in Proposition~\ref{prop:cyclicgerbe}. Then $\Phi$ is neutral if and only if it has odd degree.
\end{lemma}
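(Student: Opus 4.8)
The plan is to restate neutrality as the vanishing of the class $[\Phi]\in\H^2(k,\cA)$, where $\cA$ is the band produced by Proposition~\ref{prop:cyclicgerbe}: an abelian group scheme, a form of the cyclic group of order $n$, with $n$ prime to $\cha k$. I will use throughout the separable quadratic extension $k'/k$ and the degree\dash$2$ point $p\in P(k')$ of that proposition; write $U=P\setminus\{p\}$, so that $U\to\Phi$ is a genuine morphism. Since $p$ has residue field $k'$, we have $P_{k'}\simeq\PP^{1}_{k'}$, hence $U_{k'}$ is a dense open of $\PP^{1}_{k'}$ and has $k'$\dash points; any one of them maps into $\Phi$, so $\Phi_{k'}$ is neutral, i.e. $\on{res}_{k'/k}[\Phi]=0$. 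The two implications are then proved by independent mechanisms.

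For ``$n$ odd $\Rightarrow$ neutral'' I argue cohomologically. As $\cA$ is annihilated by $n$, so is $\H^{2}(k,\cA)$, whence $n[\Phi]=0$; and restriction–corestriction gives $2[\Phi]=\on{cor}_{k'/k}\on{res}_{k'/k}[\Phi]=0$ since $[k':k]=2$. For $n$ odd, $\gcd(2,n)=1$ forces $[\Phi]=0$, so $\Phi$ is neutral.

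For ``$n$ even $\Rightarrow$ not neutral'' I argue geometrically, by contradiction. If $\Phi$ were neutral then $\Phi\simeq\cB_{k}\cA$ and $U\to\Phi$ would be an $\cA$\dash torsor $V\to U$ over $k$, with $V_{\bar{k}}\to U_{\bar{k}}$ the cover $\gm\xrightarrow{x^{n}}\gm$. Because $n$ is even (and prime to $\cha k$, so $\cha k\neq 2$), the cyclic band $\cA$ has a canonical $\on{Gal}$\dash stable quotient $\cA\to\ZZ/2$, namely reduction modulo the unique index\dash$2$ subgroup; pushing $V$ out along it yields a smooth conic $C$ over $k$ with a double cover $C\to P$ branched exactly over $p$, whose geometric fibre over $\bar{k}$ is the compactification of $\gm\xrightarrow{x^{2}}\gm$. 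I will derive a contradiction from the existence of this $k$\dash descent. To do so I compute over $k'$: there $P_{k'}\simeq\PP^{1}_{k'}$ with $p$ given by $\{0,\infty\}$, swapped by the generator $\tau$ of $\on{Gal}(k'/k)$, so the $\tau$\dash semilinear descent datum defining $P$ has the form $t\mapsto c/t$ for some $c\in k^{\ast}$; the conic $P$ is the quaternion class $(k'/k,c)$, and $P(k)=\emptyset$ says exactly $c\notin N_{k'/k}(k'^{\ast})$. Over $k'$ the cover $C\to P$ is $t=s^{2}$ up to a unit, and any descent datum covering $t\mapsto c/t$ must send $s\mapsto b/s$ with $b^{2}\in c\cdot N_{k'/k}(k'^{\ast})$; the involutivity $\tau(b)=b$ forces $b\in k^{\ast}$, whence $c\in b^{2}\cdot N_{k'/k}(k'^{\ast})=N_{k'/k}(k'^{\ast})$, contradicting $P(k)=\emptyset$. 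Thus $C\to P$ does not descend, $\Phi$ is not neutral, and the equivalence is proved.

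The step I expect to be delicate is the last one: checking that neutrality of $\Phi$ genuinely produces a double cover of $P$ \emph{over $k$} — the push\dash out of the torsor along $\cA\to\ZZ/2$ together with its smooth compactification — and then pinning the descent obstruction precisely to membership in $N_{k'/k}(k'^{\ast})$, i.e. to the splitting of the conic $P$. The rest is formal. As a sanity check, the restriction–corestriction bound already shows that the prime\dash to\dash$2$ part of $[\Phi]$ always vanishes, so only $2$\dash primary behaviour can obstruct neutrality, exactly as the parity dichotomy requires.
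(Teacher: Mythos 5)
Your proof is correct; the odd-degree half coincides with the paper's (the class $[\Phi]\in\H^{2}(k,\cA)$ is $n$-torsion and restriction–corestriction through the quadratic point kills $2[\Phi]$), but your even-degree half takes a genuinely different route. The paper handles that direction by citation: since the band is cyclic of even order, a morphism $U\to\Gamma$ to a non-neutral gerbe $\Gamma$ banded by $\mu_{2}$ factors as $U\to\Phi\to\Gamma$, and such a morphism is taken from the proof of \cite[Proposition 13.2]{borne-vistoli} (with parameters $p=r=2$, $X=U$); a section of $\Phi$ would then give a section of $\Gamma$, so $\Phi$ cannot be neutral. You instead re-prove the required input from scratch: assuming $\Phi\simeq\cB_{k}\cA$, you convert $U\to\Phi$ into an $\cA$-torsor, push it out along the canonical surjection $\cA\to\ZZ/2$ (legitimate over $k$: the index-$2$ subgroup of a cyclic group is characteristic and $\ZZ/2$ has no non-trivial automorphisms), compactify to a double cover $C\to P$ branched exactly at $p$, and show by a semilinear descent computation that such a cover forces $c\in N_{k'/k}(k'^{\ast})$, i.e.\ $P(k)\neq\emptyset$, a contradiction. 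This is the same underlying mechanism as the paper's (only the $2$-primary part of $[\Phi]$ can obstruct neutrality, and the non-split conic supplies exactly that obstruction), but your version is self-contained and makes the obstruction concrete as a norm condition, essentially re-deriving the special case of Borne–Vistoli that the paper quotes; the paper's version is shorter at the cost of outsourcing the key construction. Three details you should make explicit: the descent data are $\tau$-semilinear, so write $t\mapsto c/\tau(t)$ and $s\mapsto b/\tau(s)$ (your involutivity computations already presuppose this); justify $C_{k'}\simeq\PP^{1}_{k'}$, which holds because in characteristic $\neq 2$ the ramification point of $C_{k'}\to P_{k'}$ over the $k'$-rational branch point has residue field $k'$ (a single geometric point lies over it, and $k'$ has no inseparable quadratic extensions), so $C_{k'}$ has a rational point; and note that the connectedness of the geometric double cover, which you need in order to identify it with $x\mapsto x^{2}$ rather than the split cover, comes from the hypothesis of Proposition~\ref{prop:cyclicgerbe} that the geometric fibers of $P\dashrightarrow\Phi$ are irreducible.
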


\begin{proof}
	By Proposition~\ref{prop:cyclicgerbe}, $\Phi$ is abelian with cyclic band $A$ of order prime to $\cha k$. We have that $\Phi$ corresponds to a class $\phi\in\H^{2}(k,A)$ and $\Phi$ is neutral if and only if $\phi$ is trivial. Let $n$ be the degree of $\Phi$, which corresponds to the degree of $A$. The group $\H^{2}(k,A)$ is abelian and $n$-torsion.
	
	Let $k'$ be the residue field of the point $p$ given by Proposition~\ref{prop:cyclicgerbe}, then $k'/k$ is a separable extension of degree $2$ and $\phi_{k'}=0$ since $k'$ splits $P$. We have $2\phi=\operatorname{cor}_{k'/k}(\phi_{k'})=0$: if $n$ is odd, then $\phi=\frac{n+1}{2}2\phi=0$.
	
	If $\Phi$ has even degree, by Proposition~\ref{prop:cyclicgerbe} $\cha k$ is prime with $n$ and hence $\cha k\neq 2$. In particular, the $2$-adic étale fundamental group of $\GG_{m}$ over $k^{s}$ is $\ZZ_{2}$. Write $U=P\setminus\{p\}$, to prove that $\Phi$ is not neutral it is then enough to show that there exists a morphism $U\to\Gamma$ where $\Gamma$ is a non-neutral gerbe banded by $\mu_{2}$, since the fact the the degree of $\Phi$ is even gives us a factorization $U\to\Phi\to\Gamma$. Such a morphism $U\to\Gamma$ is constructed in the proof of \cite[Proposition 13.2]{borne-vistoli} using parameters $p=r=2$ and $X=U$.
\end{proof}

\section{Which divisors descend to a rational curve}

We characterize which pairs $(\PP^{1}_{\bar{k}},D)$ descend to the field of moduli. We'll then prove Theorem~\ref{thm:main} using this characterization.

\begin{theorem}\label{thm:divisors}
		Let $k$ be a field and $D\subset\PP^{1}_{\bar{k}}$ an effective, reduced divisor of degree $n\ge 3$ on $\PP^{1}_{\bar{k}}$ with $n\ge 3$. The following are equivalent.
	\begin{itemize}
			\item $(\PP^{1}_{\bar{k}},D)$ is not defined over its field of moduli.
			\item The compression has no rational points and $\aut(\PP^{1}_{\bar{k}},D)$ has even degree.
			\item The compression has no rational points and $\aut(\PP^{1}_{\bar{k}},D)$ is cyclic of degree even and prime to $\operatorname{char}k$.
	\end{itemize}
\end{theorem}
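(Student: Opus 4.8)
The plan is to deduce the theorem from the three results already in hand, working throughout over the field of moduli. Since the residual gerbe $\cG$ and the compression $P$ are by construction defined over $k_0 := k(D)$, and the three conditions are intrinsic to the pair $(\PP^1_{\bar{k}},D)$, I set $G := \aut(\PP^1_{\bar{k}},D)$ and argue over $k_0$. I would record the three structural facts from Section~1: (a) $(\PP^1_{\bar{k}},D)$ is defined over $k_0$ precisely when $\cG$ is neutral, i.e.\ $\cG(k_0)\neq\emptyset$; (b) the compression $P$ is a genus-$0$ Brauer--Severi curve, so $P(k_0)\neq\emptyset$ if and only if $P\simeq\PP^1_{k_0}$; and (c) $P\simeq\PP^1_{k_0}$ forces $\cG(k_0)\neq\emptyset$. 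I would then check that, whenever $P(k_0)=\emptyset$, the rational map $P\dashrightarrow\cG$ meets the hypotheses of Proposition~\ref{prop:cyclicgerbe}: its base change to $\bar{k}$ is the quotient map $\PP^1_{\bar{k}}\to\PP^1_{\bar{k}}/G$, whose geometric fibres are open subschemes of $\PP^1$ and so irreducible of genus $0$. This makes both Corollary~\ref{cor:cyclic} and Lemma~\ref{lem:BSgerbe} available and identifies the degree of the (cyclic) band of $\cG$ with $|G|$.

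With these in place I would split the argument on whether $P$ has a rational point. If $P(k_0)\neq\emptyset$, then $P\simeq\PP^1_{k_0}$ by (b), whence $\cG$ is neutral by (c); so the first condition fails, and the second and third fail too, each requiring $P(k_0)=\emptyset$. Thus all three statements are false at once and there is nothing to prove. If instead $P(k_0)=\emptyset$, then Lemma~\ref{lem:BSgerbe} says $\cG$ is neutral exactly when its degree $|G|$ is odd; by (a) this means $(\PP^1_{\bar{k}},D)$ fails to descend to $k_0$ precisely when $|G|$ is even, giving the equivalence of the first two conditions under the running hypothesis. To reach the third condition I would invoke Corollary~\ref{cor:cyclic}: since $P(k_0)=\emptyset$ yields $P\not\simeq\PP^1_{k_0}$, the group $G$ is automatically cyclic of order prime to $\cha k$, so ``$|G|$ even'' coincides with ``$G$ cyclic of even order prime to $\cha k$'', the reverse implication being immediate. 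Combining the two cases gives the equivalence of all three statements.

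The theorem therefore rests entirely on the machinery of the previous two sections, and the substantive work is confined to the preparatory checks of the first paragraph. The step I expect to require the most care is confirming that $P\dashrightarrow\cG$ genuinely satisfies the standing hypotheses of Proposition~\ref{prop:cyclicgerbe} — above all that its geometric fibres are irreducible of genus $0$ — since this is exactly what makes Lemma~\ref{lem:BSgerbe} and Corollary~\ref{cor:cyclic} applicable; once this is secured, the equivalences drop out of the case analysis above.
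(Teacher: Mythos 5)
Your proposal is correct and takes essentially the same route as the paper: both work over the field of moduli $k_{0}=k(D)$, feed the rational map $\bP\dashrightarrow\cG$ (whose geometric fibres are checked to be irreducible of genus $0$) into Proposition~\ref{prop:cyclicgerbe}/Corollary~\ref{cor:cyclic} and Lemma~\ref{lem:BSgerbe}, and use the Section~1 fact that $\bP\simeq\PP^{1}_{k_{0}}$ forces $\cG$ to be neutral. The only differences are organizational: you split into cases on $\bP(k_{0})$ rather than running a cycle of implications, and the paper additionally reduces to infinite $k$ via \cite[Proposition 4.1]{giulio-angelo-moduli}, a point your appeal to the Section~1 facts (which rest on \cite{diproietto-tonini-zhang} in the finite-field case) covers implicitly.
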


\begin{proof}
	Thanks to \cite[Proposition 4.1]{giulio-angelo-moduli}, we may assume that $k$ is infinite. Furthermore, by base change we may assume that $k$ is the field of moduli. Let $\cG,\bP$ be the residual gerbe and the compression respectively, there is a rational map $\bP\dashrightarrow\cG$ whose geometric fibers are irreducible of genus $0$.
	
	If $(\PP^{1}_{\bar{k}},D)$ is not defined over $k$, i.e. $\cG$ is not neutral, then $\bP$ is not isomorphic to $\PP^{1}$ and hence $\cG$ is abelian with cyclic band of degree even and prime with $\cha k$ by Proposition~\ref{prop:cyclicgerbe} and Lemma~\ref{lem:BSgerbe}. Since $\cG_{\bar{k}}=\cB_{\bar{k}}\aut(\PP^{1}_{\bar{k}},D)$, we obtain that the first condition implies the third, which in turn clearly implies the second. If $\bP(k)=\emptyset$ and $\aut(\PP^{1}_{\bar{k}},D)$ has even degree, then by Lemma~\ref{lem:BSgerbe} the residual gerbe $\cG$ is not neutral, hence we conclude.
\end{proof}

\begin{corollary}\label{cor:cha2}
	Let $k$ be a field of characteristic $2$ and $D\subset \PP^{1}_{\bar{k}}$ an effective, reduced divisor with $n\ge 3$. Then $(\PP^{1}_{\bar{k}},D)$ is defined over its field of moduli.
\end{corollary}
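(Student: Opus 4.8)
The plan is to deduce this directly from the equivalence established in Theorem~\ref{thm:divisors}, reducing the entire statement to a parity observation that is special to characteristic $2$. The key point is that the third condition in that theorem is simply incompatible with $\cha k = 2$, so the first condition (failure of descent) can never hold.

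Concretely, I would argue by contradiction. Suppose $(\PP^{1}_{\bar{k}},D)$ is \emph{not} defined over its field of moduli. By the equivalence of the first and third conditions in Theorem~\ref{thm:divisors}, this forces $\aut(\PP^{1}_{\bar{k}},D)$ to be cyclic of degree \emph{even and prime to} $\cha k$. Since $\cha k = 2$, being prime to $\cha k$ is the same as being odd, so we would need a positive integer that is simultaneously even and odd, which is absurd. Hence the assumption fails and $(\PP^{1}_{\bar{k}},D)$ is defined over its field of moduli.

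I do not expect any genuine obstacle: all of the substantive work — the Riemann–Hurwitz analysis of Proposition~\ref{prop:cyclicgerbe}, the Brauer-group computation of Lemma~\ref{lem:BSgerbe}, and their synthesis in Theorem~\ref{thm:divisors} — has already been carried out. The only thing to notice is that the "even and prime to $\cha k$" clause, which in odd or mixed characteristic genuinely cuts out nontrivial obstructed cases, becomes vacuous once $\cha k = 2$. Thus the corollary is really just the remark that Theorem~\ref{thm:divisors}'s obstruction cannot occur here, and the proof is a single line invoking that theorem.
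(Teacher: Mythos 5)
Your proof is correct and is precisely the argument the paper intends: the corollary is stated without proof because it follows immediately from Theorem~\ref{thm:divisors}, since an automorphism group of degree both even and prime to $\cha k=2$ cannot exist. Nothing further is needed.
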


\section{Divisors of degree $6$}

\begin{proposition}\label{prop:deg6}
	Let $k$ be a field and $D\subset \PP^{1}_{\bar{k}}$ an effective, reduced divisor of degree $6$. Then $(\PP^{1}_{\bar{k}},D)$ is  defined over its field of moduli.
\end{proposition}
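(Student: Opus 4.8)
The plan is to argue by contradiction using the classification in Theorem~\ref{thm:divisors}. Suppose $(\PP^1_{\bar k},D)$ is \emph{not} defined over its field of moduli; by base change we may assume that $k$ is the field of moduli. Then Theorem~\ref{thm:divisors} tells us that the compression $\bP$ has no rational point and that $G\eqdef\aut(\PP^1_{\bar k},D)$ is cyclic of even order $2m$ prime to $\cha k$. A cyclic subgroup of $\PGL_2(\bar k)$ of order prime to the characteristic is diagonalizable, so $G$ fixes exactly two points of $\PP^1_{\bar k}$ and every other orbit has exactly $2m$ elements. I would then read off the possible $G$-orbit structures on the six points of $D$ and show each one is impossible.

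Write $a\in\{0,1,2\}$ for the number of the two fixed points lying on $D$; the remaining points of $D$ split into free orbits of size $2m$, so $2m$ divides $6-a$. As $2m$ is even, the pair $(a,2m)$ must be one of $(0,2),(0,6),(2,2),(2,4)$ (the value $a=1$ is excluded). Recall from Section~1 that the compressed divisor $\bD\subset\bP$ is finite étale over $k$ and that over $\bar k$ it is the set of $G$-orbits on $D$, so $\deg\bD$ equals the number of orbits. I would dispose of three of the four cases at once: in case $(0,6)$ the divisor $D$ is a single free orbit, whence $\bD$ is one reduced $\bar k$-point, i.e. a $k$-rational point of $\bP$, contradicting $\bP(k)=\emptyset$; in cases $(0,2)$ and $(2,4)$ the compressed divisor $\bD$ has degree $3$, so $\bP$ carries a closed point of odd degree. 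Since $\bP$ is a genus-$0$ curve its index divides $2$, so an odd-degree closed point forces the index to be $1$ and hence a rational point (combine the odd-degree point with the degree $-2$ canonical class and apply Riemann--Roch), again contradicting $\bP(k)=\emptyset$.

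The surviving case $(a,2m)=(2,2)$ is the main obstacle, since there $\bD$ has the even degree $4$ and the parity trick fails; I would instead show the configuration is incompatible with $G$ being cyclic. Choosing a coordinate $x$ in which the two fixed points are $0,\infty$ and the generator $\sigma$ acts by $x\mapsto -x$ (here $\cha k\neq 2$, as $|G|=2$ is prime to the characteristic), the four remaining points of $D$ form two orbits $\{a,-a\}$ and $\{b,-b\}$, so $D=\{0,\infty,a,-a,b,-b\}$. The involution $\tau\colon x\mapsto ab/x$ swaps $0\leftrightarrow\infty$ and exchanges $\{a,-a\}$ with $\{b,-b\}$, hence preserves $D$, so $\tau\in\aut(\PP^1_{\bar k},D)=G$. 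But $\tau$ interchanges the two fixed points while every element of the cyclic group $G=\{\id,\sigma\}$ fixes them, so $\tau\neq\id,\sigma$; equivalently $\aut(\PP^1_{\bar k},D)$ contains the Klein four-group $\langle\sigma,\tau\rangle$ and is not cyclic, contradicting $|G|=2$. Having reached a contradiction in each of the four cases, I conclude that $(\PP^1_{\bar k},D)$ is defined over its field of moduli.
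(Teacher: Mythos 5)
Your proof is correct and follows essentially the same route as the paper: reduce via Theorem~\ref{thm:divisors} to the case where $\bP(k)=\emptyset$ and $\aut(\PP^{1}_{\bar k},D)$ is cyclic of even order prime to $\cha k$, analyze the orbit structure on the six points to force the configuration with two fixed points and $|\aut|=2$, and then exhibit the extra involution $x\mapsto ab/x$ (the paper's $x\mapsto\lambda/x$ after normalizing $a=1$) to contradict cyclicity. The only cosmetic differences are that you enumerate the cases $(a,2m)$ explicitly and re-derive, via the index and Riemann--Roch, the fact that a non-trivial genus-$0$ curve carries no odd-degree closed point, which the paper simply invokes as ``$\bD$ has even degree.''
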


\begin{proof}
	Thanks to Corollary~\ref{cor:cha2}, we may assume $\cha k\neq 2$. Up to base change, we may assume that $k$ is the field of moduli, let $\bP$ be the compression and $\bD\subset\bP$ the compressed divisor, we have that $\bD$ is finite étale over $k$. Assume by contradiction that $(\PP^{1}_{\bar{k}},D)$ is not defined over $k$, then by Theorem~\ref{thm:divisors} $\aut(\PP^{1}_{\bar{k}},D)$ is cyclic of degree even and prime to $\cha k$, and $\bP$ is a non-trivial rational curve.
	
	Since $\bP$ is non-trivial, then $\bD$ has even degree. Since $\cha k\neq 2$, then $\bD_{\bar{k}}=D/\aut(\PP^{1},D)$ has either $2,4$ or $6$ points. Since $\aut(\PP^{1}_{\bar{k}},D)$ is cyclic of degree even and prime with $\cha k$, then its action on $\PP^{1}_{\bar{k}}$ has two fixed points, while all the other orbits have even cardinality equal to $|\aut(\PP^{1}_{\bar{k}},D)|$. It then follows that $D$ contains the $2$ fixed points, $\bD_{\bar{k}}$ has $4$ points and $\aut(\PP^{1}_{\bar{k}},D)$ has order $2$.
	
	Choose coordinates on $\PP^{1}_{\bar{k}}$ such that $0,\infty\in D$ are the two fixed points, and $1$ is another point of $D$. With respect to these coordinates, we have that the only non-trivial element of $\aut(\PP^{1}_{\bar{k}},D)$ is the map $x\mapsto -x$, we may thus write $D=\{0,\infty,1,-1,\lambda,-\lambda\}$. It is immediate to check that $x\mapsto \lambda /x$ defines another non-trivial automorphism of $(\PP^{1},D)$, hence we get a contradiction.
\end{proof}

\section{Divisors of even degree $n\ge 8$}\label{sec:n8}

In order to construct divisors of even degree $n\ge 8$ not defined over their field of moduli, we first construct a divisor over a non-trivial rational curve with certain special properties.

\begin{lemma}\label{lem:prep}
	Let $k$ be an infinite field of characteristic $\neq 2$, $P$ a non-trivial rational curve, $n\ge 8$ an even integer.
	
	There exists a reduced, effective divisor $E\subset P$, a quadratic extension $k'/k$, a point $p\in P(k')$ with Galois conjugate $\bar{p}\in P(k')$ and a cyclic cover $f:\PP^{1}_{\bar{k}}\to P_{\bar{k}}$ of degree $2$ ramified over $p,\bar{p}$ such that the divisor $D=f^{-1}(E_{\bar{k}})$ with the reduced structure has degree $n$ and $\aut(f)$ is its own centralizer in $\aut(\PP^{1}_{\bar{k}},D)$.
\end{lemma}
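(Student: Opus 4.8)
The plan is to realize $\PP^{1}_{\bar{k}}$ as the double cover of $P_{\bar{k}}\simeq\PP^{1}_{\bar{k}}$ branched at a conjugate pair of points, and to engineer $E$ generically so that the only symmetries of the branched configuration are the deck transformations. First I would fix the branch data. Since $P$ is a non-trivial conic, $\pic(P)=\ZZ\cdot L$ with $\deg L=2$ and $P(k)=\emptyset$; the pencil $|L|$ is a $\PP^{1}$ over $k$ whose generic member is a reduced divisor of degree $2$. As $P$ has no rational point, such a member is a single closed point whose residue field is a quadratic (hence, since $\cha k\neq 2$, separable) extension $k'/k$ splitting $P$; this produces $k'$, the point $p\in P(k')$ and its conjugate $\bar{p}$ simultaneously. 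Over $\bar{k}$, after choosing coordinates with $p=0$, $\bar{p}=\infty$ on $P_{\bar{k}}$, I take $f\colon\PP^{1}_{\bar{k}}\to P_{\bar{k}}$, $x\mapsto x^{2}$, the unique double cover branched at $p,\bar{p}$; by Riemann--Hurwitz its source has genus $0$, and $\aut(f)=\langle\iota\rangle\simeq\ZZ/2$ with $\iota\colon x\mapsto -x$.

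Next I would reduce the centralizer condition to a statement on $P_{\bar{k}}$. An automorphism $g\in\aut(\PP^{1}_{\bar{k}},D)$ centralizes $\iota$ if and only if it descends to an automorphism $\bar{g}$ of $P_{\bar{k}}$ preserving the branch locus $\{p,\bar{p}\}$ and the image $E_{\bar{k}}=f(D)$; conversely every such $\bar{g}$ lifts to two automorphisms $g,\iota g$ commuting with $\iota$ and preserving $D=f^{-1}(E_{\bar{k}})$, because a double cover of $\PP^{1}$ is determined by its branch locus. Hence the centralizer of $\aut(f)$ is an extension of $\langle\iota\rangle$ by the group $\bar{C}$ of projective automorphisms of $P_{\bar{k}}$ stabilizing both $\{p,\bar{p}\}$ and the ``free'' points $E_{\bar{k}}\setminus\{p,\bar{p}\}$, so that ``$\aut(f)$ is its own centralizer'' is equivalent to $\bar{C}=1$.

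Then I would choose $E$, watching parity. Since effective $k$-divisors on $P$ have even degree (as $\deg L=2$), I take $E$ disjoint from $\{p,\bar{p}\}$ of degree $n/2$ when $n\equiv 0\pmod 4$, and $E=\{p,\bar{p}\}+E'$ with $E'$ disjoint from $\{p,\bar{p}\}$ of degree $n/2-1$ when $n\equiv 2\pmod 4$; in both cases $\deg E$ is even, $E$ moves in a positive-dimensional $k$-linear system, the reduced preimage $D$ has degree $n$, and the number of free points is $a\ge 4$. The stabilizer of $\{p,\bar{p}\}$ in $\PGL_{2}$ is the normalizer $N(T)$ of a torus, a one-dimensional group containing $\bar{C}$, and I must force $\bar{C}=1$ by genericity of the $a\ge 3$ free points.

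The hard part is this last genericity step, made delicate by the two constraints that $E$ be $k$-rational and that $N(T)$ be positive-dimensional, so that three-transitivity of $\PGL_{2}$ cannot be invoked directly. I would argue that the locus of divisors $E$ whose free points admit a nontrivial stabilizer in $N(T)$ is closed, defined over $k$ (because $\{p,\bar{p}\}$ is), and proper: for $a\ge 3$ points in general position, neither a nontrivial scaling (already killed by a single free point) nor a swap $x\mapsto s/x$ (which for $a\ge 3$ cannot permute the free points generically) preserves the configuration, so the good open locus is nonempty over $\bar{k}$. Since $k$ is infinite, this nonempty open subset of the $k$-linear system has a $k$-point, furnishing the required $E$; intersecting with the open conditions that $E$ be reduced and appropriately (dis)joint from $\{p,\bar{p}\}$ keeps the locus nonempty. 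The genuine subtlety is verifying properness uniformly over the infinite group $N(T)$ — equivalently, exhibiting one configuration with trivial $N(T)$-stabilizer — which is exactly where the hypothesis $n\ge 8$, forcing $a\ge 3$, enters, since for $a\le 2$ a nontrivial swap always survives.
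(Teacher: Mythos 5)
Your proposal is correct in substance but takes a genuinely different route from the paper's. The paper chooses $E$ \emph{first}: for $n\ge 10$ it produces, via the dominant sum map $\PP(\H^{0}(\omega^{-1}))\times\PP(\H^{0}(\omega^{-m+1}))\to\PP(\H^{0}(\omega^{-m}))$, a generic $k$-rational $E$ whose \emph{full} automorphism group $\aut(P_{\bar{k}},E_{\bar{k}})$ is trivial, and only then picks the quadratic point $p$ (inside $E$ or off $E$ according to the parity of $n/2$); since four points always carry Klein four-group symmetry, the case $n=8$ then needs a separate argument, choosing $p$ generically so that no nontrivial $g\in\aut(P_{\bar{k}},E_{\bar{k}})$ satisfies $g(p)\in\{p,\bar{p}\}$. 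You invert the order: fix $\{p,\bar{p}\}$ first, and require only that the free points of $E$ have trivial stabilizer in the one-dimensional group $N(T)\subset\PGL_{2}$ preserving $\{p,\bar{p}\}$. This is a strictly weaker genericity requirement than trivial full automorphism group, and since $a\ge 3$ generic points of $\GG_{\rmm}$ have trivial $N(T)$-stabilizer, it makes the argument uniform in $n\ge 8$: you trade the paper's case split at $n=8$ for a genericity statement inside $N(T)$. Both proofs rest on the same two pillars — every $k$-divisor on a pointless conic has even degree (your parity bookkeeping agrees exactly with the paper's), and a nonempty $k$-open subset of a projective space over an infinite $k$ has a rational point — and your reduction of the centralizer condition to $\bar{C}=1$ is the same descent-to-the-quotient step the paper performs when it says $C/\aut(f)$ acts faithfully on $(P_{\bar{k}},E_{\bar{k}})$ and preserves the branch locus.

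Two local flaws, both repairable. First, the parenthetical claim that a nontrivial scaling is ``already killed by a single free point'' is false: $\{1,-1\}$, or any union of $\mu_{d}$-cosets, is preserved by a nontrivial scaling. The correct argument is a dimension count: configurations of $a$ points preserved by a scaling of exact order $d\ge 2$ are unions of $\mu_{d}$-orbits, hence form a locus of dimension $a/d<a$; likewise configurations preserved by some involution $y\mapsto s/y$ have dimension at most $1+a/2<a$ once $a\ge 3$ (while for $a\le 2$ such an involution always exists, which is exactly why $n\ge 8$ is needed). Second, $|L|$ is a net (a $\PP^{2}$), not a pencil, though the relevant fact is even stronger than you state: since $P(k)=\emptyset$, \emph{every} member of $|L|$ is a reduced degree-$2$ closed point, giving $k'$, $p$, $\bar{p}$. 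Finally, the asserted closedness of the bad locus deserves a word — constructibility (Chevalley) together with non-density already suffices to place it inside a proper closed $k$-subvariety, which is all the genericity argument needs.
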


\begin{proof}
	Let $\omega$ be the canonical bundle of $P$. Write either $n=4m$ or $n=4m-2$ with $m\ge 2$, depending on the class of $n$ modulo $4$. Consider the natural map $\PP(\H^{0}(\omega^{-1}))\times\PP(\H^{0}(\omega^{-m+1}))\to\PP(\H^{0}(\omega^{-m}))$ corresponding to the sum of divisors, it is dominant.
	
	If $n\ge 10$, i.e. $m\ge 3$, a generic rational point of $\PP(\H^{0}(\omega^{-m}))$ corresponds to a divisor étale over $k$ with trivial automorphism group scheme. Moreover, any rational point of $\PP(\H^{0}(\omega^{-1}))$ corresponds to a point of $P$ whose residue field is a quadratic extension of $k$. Taking a generic rational point of $\PP(\H^{0}(\omega^{-1}))\times\PP(\H^{0}(\omega^{-m+1}))$, we can thus find a divisor $E\subset P$ of degree $2m$ étale over $k$ with trivial automorphism group scheme and a quadratic extension $k'/k$ such that $E(k')\neq\emptyset$. If $n=8$, with an analogous argument we get a reduced divisor $E\subset P$ of degree $4$ such that $\aut(\PP^{1},E_{\bar{k}})$ is the Klein group. 
	
	If $p\in P(k')$ is a $k'$-rational point, denote by $\bar{p}$ its Galois conjugate. If $n=4m-2$, choose $p\in E(k')$, otherwise $p\in P\setminus E(k')$. If $n=8$, for every non-trivial $g\in\aut(P_{\bar{k}},E_{\bar{k}})$ the set of points $p\in E(k')$ such that either $g(p)=p$ or $g(p)=\bar{p}$ is not Zariski-dense, since $k$ is infinite we may choose $p\in P\setminus E(k')$ so that $g(p)\neq p$, $g(p)\neq\bar{p}$ for every non-trivial $g\in\aut(P_{\bar{k}},E_{\bar{k}})$.
	
	Consider a cyclic cover $f:\PP^{1}_{\bar{k}}\to P_{\bar{k}}$ of degree $2$ which ramifies at $p$ and $\bar{p}$ and let $D\subset \PP^{1}_{\bar{k}}$ be the inverse image of $E_{\bar{k}}$ with the reduced structure, it is an effective divisor of degree $n$. The group of automorphisms $\aut(\PP^{1},D)$ has a cyclic subgroup $\aut(f)$ of order $2$. Let $C\subset\aut(\PP^{1},D)$ be the centralizer of $\aut(f)$, then $C/\aut(f)$ acts faithfully on $(P_{\bar{k}},E_{\bar{k}})$. If $n\ge 10$, $\aut(P_{\bar{k}},E_{\bar{k}})$ is trivial and hence $C=\aut(f)$. If $n=8$ and $g\in C/\aut(f)\subset \aut(P_{\bar{k}},E_{\bar{k}})$, then $g(p)$ is a branch point for $f$ and hence either $g(p)=p$ or $g(p)=\bar{p}$. By our choice of $p$, we get that $g$ is trivial, hence $C=\aut(f)$ in this case, too.
\end{proof}

\begin{lemma}\label{lem:grp}
    Let $G$ be a finite group and $g\in G$ a non trivial element with $g^2=\id$ and such that the centralizer of $g$ is $<g>$. Then $|G|/2$ is odd.
\end{lemma}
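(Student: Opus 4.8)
The statement is purely group-theoretic, and the claim ``$|G|/2$ is odd'' is equivalent to saying that a Sylow $2$-subgroup of $G$ has order exactly $2$. So the plan is to fix a Sylow $2$-subgroup and show it must be $\langle g\rangle$ itself. Note first that, since $g$ is non-trivial with $g^{2}=\id$, it has order exactly $2$, so $\langle g\rangle$ has order $2$ and $|G|$ is even; in particular $g$ is contained in at least one Sylow $2$-subgroup.

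I would take a Sylow $2$-subgroup $P$ of $G$ with $g\in P$ and compute the centralizer of $g$ inside $P$. Since $C_{G}(g)=\langle g\rangle$ by hypothesis and $g\in P$, we get
\[
C_{P}(g)=P\cap C_{G}(g)=P\cap\langle g\rangle=\langle g\rangle .
\]
The key input is then the standard fact that a non-trivial finite $p$-group has non-trivial center: applying it to the (non-trivial, as $g\in P$) $2$-group $P$ gives $Z(P)\neq 1$. Because $Z(P)$ centralizes everything in $P$, we have $Z(P)\subseteq C_{P}(g)=\langle g\rangle$, and since $\langle g\rangle$ has prime order $2$ this forces $Z(P)=\langle g\rangle$. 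In particular $g\in Z(P)$, so $g$ commutes with all of $P$, i.e.\ $C_{P}(g)=P$. Comparing with $C_{P}(g)=\langle g\rangle$ yields $P=\langle g\rangle$, hence $|P|=2$.

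Finally, since $P$ is a Sylow $2$-subgroup, $|G|=|P|\cdot[G:P]=2\,[G:P]$ with $[G:P]$ odd, which is exactly the assertion that $|G|/2$ is odd. The only genuinely non-routine point is the second step: recognizing that the self-centralizing hypothesis should be combined with the non-triviality of the center of a $p$-group. Once one sees that a self-centralizing involution must in fact be central in \emph{its own} Sylow $2$-subgroup, the Sylow $2$-subgroup collapses to $\langle g\rangle$ and the conclusion is immediate; the rest is bookkeeping with Sylow orders, so I expect no further obstacle.
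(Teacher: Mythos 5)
Your proof is correct and follows essentially the same route as the paper: take a Sylow $2$-subgroup $P$ containing $g$, use the non-triviality of $Z(P)$ to force $Z(P)=\langle g\rangle$, conclude that $P$ centralizes $g$ and hence $P=\langle g\rangle$, so $[G:P]=|G|/2$ is odd. The only difference is expository (you spell out the intermediate computation $C_{P}(g)=P\cap C_{G}(g)$), not mathematical.
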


\begin{proof}
    Let $P\subset G$ be a $2$-Sylow subgroup containing $g$, the center $Z$ of $P$ is non-trivial. Since the centralizer of $g$ is $<g>$, we get that $Z\subset <g>$ and hence $Z=<g>$. It follows that $P$ centralizes $g$, hence $P=Z=<g>$ and $|G|/2=[G:P]$ is odd. 
\end{proof}

\begin{proposition}\label{prop:deg8}
	Let $k$ be a field of characteristic $\neq 2$, assume that the $2$-torsion $\on{Br}(k)[2]$ of the Brauer group is non-trivial. Let $n\ge 8$ be an even integer. There exists an effective, reduced divisor $D\subset \PP^{1}_{\bar{k}}$ of degree $n$ with field of moduli equal to $k$ such that $(\PP^{1}_{\bar{k}},D)$ is not defined over $k$.
\end{proposition}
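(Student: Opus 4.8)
The plan is to realize the desired $D$ by producing a non-neutral residual gerbe and invoking the characterization of Theorem~\ref{thm:divisors}. First I would fix a non-trivial class in $\br(k)[2]$; since $\cha k\neq 2$ this corresponds to a non-trivial rational curve (Brauer--Severi variety of dimension $1$) $P$ over $k$ with $P(k)=\emptyset$. I then apply Lemma~\ref{lem:prep} to this $P$ and the given even integer $n\ge 8$ (after replacing $k$ by an infinite field if necessary—though $\br(k)[2]\neq 0$ already forces $k$ to be infinite). Lemma~\ref{lem:prep} supplies a reduced divisor $E\subset P$, a quadratic extension $k'/k$, a point $p\in P(k')$ with conjugate $\bar p$, and a degree-$2$ cyclic cover $f\colon\PP^1_{\bar k}\to P_{\bar k}$ ramified over $p,\bar p$, such that $D=f^{-1}(E_{\bar k})$ has degree $n$ and $\aut(f)$ is its own centralizer in $G\eqdef\aut(\PP^1_{\bar k},D)$.

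The key structural step is to pin down the parity of $|G|$. The element $g$ generating $\aut(f)$ satisfies $g^2=\id$ and has centralizer $\langle g\rangle$, so Lemma~\ref{lem:grp} gives that $|G|/2$ is odd, i.e. $G$ has even order with $2\,\|\,|G|$. In particular $G$ is \emph{not} cyclic of order prime to $\cha k$ unless $|G|=2$; what matters for us is only that $G$ has even degree. The remaining task is to verify the two hypotheses of the middle bullet of Theorem~\ref{thm:divisors}: that $k$ is the field of moduli of $(\PP^1_{\bar k},D)$, and that the compression $\bP$ has no rational point.

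The main obstacle is identifying the compression and showing both that its field of moduli is exactly $k$ and that $\bP(k)=\emptyset$. I would argue that the cover $f$, being Galois-equivariant in its ramification data (it ramifies precisely over the Galois-stable pair $\{p,\bar p\}$ and $E$ is defined over $k$), descends to exhibit $P$ itself as a quotient closely related to the compression: concretely, the quotient $\PP^1_{\bar k}/\aut(f)\cong P_{\bar k}$ is defined over $k$ as the non-trivial curve $P$, so the compression $\bP=\PP^1_{\bar k}/G$ is a further quotient of $P$ and inherits the property $\bP(k)=\emptyset$ from $P(k)=\emptyset$ (a non-constant map $\bP\to P$ of rational curves over $k$ would force $P$ to have a rational point if $\bP$ did). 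For the field of moduli, I would use that $P$ and the $k$-structures on $E$, on $\{p,\bar p\}$, and hence on the gerbe-theoretic data are all defined over $k$, so the residual gerbe $\cG$ is defined over $k$ and $k(D)=k$; the genericity built into Lemma~\ref{lem:prep} (trivial automorphisms of $E$ for $n\ge 10$, controlled automorphisms for $n=8$) ensures no accidental extra descent shrinks the field of moduli below $k$. Once $k(D)=k$ and $\bP(k)=\emptyset$ and $|G|$ is even are all in hand, Theorem~\ref{thm:divisors} immediately yields that $(\PP^1_{\bar k},D)$ is not defined over $k$, completing the proof.
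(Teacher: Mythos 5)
Your proposal follows the same skeleton as the paper (Lemma~\ref{lem:prep} $+$ Lemma~\ref{lem:grp} $+$ Theorem~\ref{thm:divisors}), but the step where you show $\bP(k)=\emptyset$ has a genuine error. You claim that $\bP=\PP^{1}_{\bar k}/G$ ``inherits'' pointlessness because it is a further quotient of $P=\PP^{1}_{\bar k}/\aut(f)$, and your parenthetical justification invokes a non-constant map $\bP\to P$ --- but no such map exists here; the natural map goes the other way, $P\to\bP$. Moreover, pointlessness is simply \emph{not} inherited by quotients: projection from a $k$-point of $\PP^{2}$ not lying on a conic realizes \emph{any} conic $P$, pointless or not, as a degree-$2$ cover of $\PP^{1}_{k}$ defined over $k$, i.e.\ $\PP^{1}_{k}$ is the quotient of $P$ by an involution defined over $k$. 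The correct argument --- and this is precisely where Lemma~\ref{lem:grp} is needed, contrary to your remark that ``what matters for us is only that $G$ has even degree'' --- is that the induced map $P\to\bP$ has degree $|G|/|\aut(f)|=a$ with $a$ \emph{odd}; if $\bP$ had a rational point, its preimage would be an effective divisor of odd degree $a$ on $P$, which is impossible on a non-trivial Brauer--Severi curve (every divisor has even degree, the index being $2$). So the essential output of Lemma~\ref{lem:grp} is the oddness of $|G|/2$, not merely the evenness of $|G|$.

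A secondary weakness is the field-of-moduli step: asserting that ``the gerbe-theoretic data are all defined over $k$'' is the right intuition but is not an argument, especially over non-perfect fields (the paper works with arbitrary $k$). The paper makes this precise by forming the root stack $\sqrt[2]{P,p}$ along the degree-$2$ divisor $p$ and its \'etale fundamental gerbe $\Pi$, which is banded by $\mu_{2}$: the pair $(\sqrt[2]{P,p},E')\to\Pi$ is a family whose geometric fibers are isomorphic to $(\PP^{1}_{\bar k},D)$, hence it induces a morphism $\Pi\to\cG$, and since $\Pi$ is a gerbe over $\spec k$ the factorization $\Pi\to\cG\to\spec k(D)\to\spec k$ forces $k(D)=k$. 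The same cartesian diagram is what produces the map $P\to\bP$ of degree $a$ used above, so this construction is not optional bookkeeping --- it is the mechanism that simultaneously pins down the field of moduli and identifies the compression.
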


\begin{proof}
	Since the $2$-torsion of the Brauer group is non trivial, in particular $k$ is infinite, and there exists a non-trivial rational curve $P$. Let $E\subset P$, $p\in E(k')$, $f:\PP^{1}_{\bar{k}}\to P_{\bar{k}}$, $D\subset \PP^{1}_{\bar{k}}$ as in Lemma~\ref{lem:prep}. 
	
	Let $k(D)/k$ be the field of moduli of $(\PP^{1}_{\bar{k}},D)$ and $\cG$ the residue gerbe. Consider $p$ as a divisor of degree $2$, let $\sqrt[2]{P,p}$ be $2$nd root stack and $\sqrt[2]{P,p}\to\Pi$ its étale fundamental gerbe. We have that $\Pi$ is an abelian gerbe banded by $\mu_{2}$ and $\sqrt[2]{P,p}\to\Pi$ is representable, it can be checked after base changing to $\bar{k}$.
	
	 Let $E'\subset \sqrt[2]{P,p}$ be the inverse image of $E\subset P$ with the reduced structure, since $\sqrt[2]{P,p}\to\Pi$ is representable then $E'$ finite étale over $\Pi$ and the geometric fibers of $(\sqrt[2]{P,p},E')\to\Pi$ are isomorphic to $(\PP^{1}_{\bar{k}},D)$. Let $\cG$, $\cP\to\cG$ be the residual gerbe and the universal family of $(\PP^{1}_{\bar{k}},D)$ respectively. By definition of residual gerbe, we have a $2$-cartesian diagram
	\[\begin{tikzcd}
		\sqrt[2]{P,p}\rar\dar	&	\cP\dar	\\
		\Pi\rar					&	\cG.
	\end{tikzcd}\]
	Since $\Pi$ is a gerbe over $k$ and there is a factorization $\Pi\to\cG\to\spec k(D)\to\spec k$, we get that $k(D)=k$.
	
	By Lemma~\ref{lem:grp}, $\aut(f)$ has odd index in $\aut(\PP^{1},D)$. Let $2a$ with $a$ odd be the order of $\aut(\PP^{1},D)$. Let $\bP$ be the compression of $(\PP^{1}_{\bar{k}},D)$, by the diagram above there is a natural induced map $P\to \bP$ of degree $a$. Since $a$ is odd and $P$ is a non-trivial rational curve, it follows that $\bP$ is a non-trivial rational curve, since otherwise $P$ would have a divisor of odd degree. By Corollary~\ref{cor:cyclic}, we get that $\aut(\PP^{1},D)$ is cyclic of even order and hence $(\PP^{1}_{\bar{k}},D)$ is not defined over $k$ by Theorem~\ref{thm:divisors}.
\end{proof}

\section{Generalizations of results of A. Marinatto and B. Huggins}

The following lemma connects our results to those of Marinatto.

\begin{lemma}\label{lem:compoint}
	Let $k$ be a field and $D\subset\PP^{1}_{\bar{k}}$ an effective, reduced divisor of degree $n\ge 3$ on $\PP^{1}_{\bar{k}}$ with $n\ge 3$. The following are equivalent.
	\begin{itemize}
		\item $D$ descends to a divisor of $\PP^{1}$ over the field of moduli.
		\item The universal family $\cP\to\cG$ has a rational point.
		\item The compression $\bP$ has a rational point.
	\end{itemize}
\end{lemma}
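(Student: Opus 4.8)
The plan is to prove the three equivalences in Lemma~\ref{lem:compoint} by establishing a cycle of implications among them, exploiting the relationship between the universal family $\cP\to\cG$, the compression $\bP$, and the residual gerbe $\cG$ recalled in Section~1.

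First I would observe that the three conditions are linked by two natural maps. Recall from the excerpt that there is a rational map $\bP\dashrightarrow\cG$ whose pullback to $\bar k$ is the projection $\PP^1_{\bar k}\to\PP^1_{\bar k}/\aut(\PP^1_{\bar k},D)$, and that $\bP$ is the coarse moduli space of $\cP$. Since $\cP\to\cG$ is a projective bundle of relative dimension $1$, a $k$-point of $\cP$ lying over a $k$-point of $\cG$ would split the gerbe and, via the universal family, produce a $\PP^1$ over the field of moduli together with the descended divisor $D$; conversely, a descent of $D$ to a divisor of $\PP^1$ over the field of moduli is precisely a $k(D)$-point of the functor $\cG$ together with a trivialization of the projective bundle, i.e. a rational point of $\cP$. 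This suggests proving \emph{(first)}$\Leftrightarrow$\emph{(second)} rather directly from the definitions, where a descent of $(\PP^1_{\bar k},D)$ with $\bP\simeq\PP^1$ corresponds to a section of $\cP\to\cG$ over a section of $\cG$.

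Next I would handle \emph{(second)}$\Rightarrow$\emph{(third)} and \emph{(third)}$\Rightarrow$\emph{(first)}. The first of these is the easy direction: a rational point of $\cP$ maps to a rational point of its coarse space $\bP$, since the structure map $\cP\to\bP$ is defined over $k(D)$ and sends $k(D)$-points to $k(D)$-points. For \emph{(third)}$\Rightarrow$\emph{(first)}, the key input is the last sentence of Section~1: if $\bP\simeq\PP^1_{k(D)}$ then $\cG$ has a $k(D)$-rational point, because the rational map $\bP\dashrightarrow\cG$ can be evaluated at a suitable rational point of $\bP$. I would combine this with the fact that once $\cG$ is neutral and $\bP\simeq\PP^1$, the universal $\PP^1$-bundle $\cP$ pulled back along the section trivializes over $\PP^1_{k(D)}$ (any Brauer--Severi curve with a rational point is $\PP^1$), yielding an honest descent of $D$ to a divisor on $\PP^1_{k(D)}$. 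One must check that a single $k(D)$-point of $\bP$ suffices to conclude $\bP\simeq\PP^1_{k(D)}$, which holds since a genus-$0$ curve with a rational point is isomorphic to $\PP^1$.

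The main obstacle I anticipate is the careful bookkeeping in \emph{(third)}$\Rightarrow$\emph{(first)}: one has only a \emph{rational} map $\bP\dashrightarrow\cG$, so a priori a given rational point of $\bP$ might lie in the locus where the map is undefined. I would address this by noting that $\bP\simeq\PP^1_{k(D)}$ once it has any rational point, hence it has infinitely many (as $k(D)$ is infinite, which we may assume after the reduction used in Theorem~\ref{thm:divisors}, or by passing to the dense open of definition $U\subset\bP$ from Proposition~\ref{prop:cyclicgerbe}), and a rational point in the open locus $U$ produces the desired $k(D)$-point of $\cG$. The remaining trivialization of the $\PP^1$-bundle then follows formally. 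The rest of the argument is a matter of unwinding the definitions of the residual gerbe and the compression, so I expect no further serious difficulty.
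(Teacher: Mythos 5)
Your splitting into (first)$\Leftrightarrow$(second), (second)$\Rightarrow$(third), (third)$\Rightarrow$(first) is fine, and the first two of these are correct and essentially what the paper does: a descent gives a morphism $\PP^{1}_{k(D)}\to\cP$, hence a point of $\cP$; a point of $\cP$ maps to a point of the coarse space $\bP$; and conversely a point of $\cP$ gives a section of $\cG$ whose associated model carries a rational point, hence is $\PP^{1}$. The problem is in (third)$\Rightarrow$(first), which is the only direction with real content.

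There you argue: $\bP(k)\neq\emptyset$ implies $\bP\simeq\PP^{1}_{k}$, so evaluating the rational map $\bP\dashrightarrow\cG$ at a rational point in its locus of definition neutralizes $\cG$; then ``the universal $\PP^{1}$-bundle $\cP$ pulled back along the section trivializes \dots (any Brauer--Severi curve with a rational point is $\PP^{1}$).'' This last step is a genuine gap: you have produced a $k$-point of $\cG$, not a $k$-point of the fiber of $\cP\to\cG$ over it, so the parenthetical justification presupposes exactly what has to be proved. Neutrality of $\cG$ together with $\bP\simeq\PP^{1}$ does \emph{not} imply that an arbitrary section of $\cG$ has trivial associated conic. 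Concretely, take $k=\RR$ and $D\subset\PP^{1}_{\CC}$ real and stable under $\iota\colon x\mapsto -1/x$ (e.g.\ $D=\{0,\infty,\pm 1,2,-\tfrac12,3,-\tfrac13\}$): then $(\PP^{1}_{\RR},D)$ is a model, so $\cG$ is neutral and $\bP\simeq\PP^{1}_{\RR}$, yet the twist of $(\PP^{1}_{\RR},D)$ by the cocycle sending complex conjugation to $\iota$ is another section of $\cG$ whose underlying curve is the anisotropic conic ($[x:y]$ with $\iota(\bar p)=p$ forces $|x|^{2}+|y|^{2}=0$). So whether the section you constructed has trivial model depends on \emph{which} section it is, and this is precisely what your argument does not control. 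The missing idea, which is the heart of the paper's proof (and of D\`ebes--Emsalem), is that the rational map $\bP\dashrightarrow\cG$ factors through $\cP$ on its locus of definition: by the proof of Theorem 5.4 of the Bresciani--Vistoli paper there is an open substack $U\subset\cP$ which is a scheme, mapping isomorphically onto an open subset of $\bP$; a rational point of $\bP$ in this open (which exists by density, $k$ infinite) is therefore already a rational point of $\cP$, and then your own (second)$\Rightarrow$(first) argument finishes. Routing through $\cG$ alone loses the lift, and with it the rational point on the model.

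A secondary issue: your density argument needs $k(D)$ infinite, and the reduction you invoke (the one used in Theorem~\ref{thm:divisors}) only yields that over a finite field the pair is \emph{defined over} its field of moduli, not that $D$ descends to a divisor of $\PP^{1}$; the paper treats the finite case separately, combining the theorem of Di Proietto--Tonini--Zhang ($\cG(k)\neq\emptyset$ for $k$ finite) with the vanishing of Brauer groups of finite fields to force the model to be $\PP^{1}_{k}$. This part is easily repaired, but it does need to be said.
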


\begin{proof}
	By base change, we may assume that $k$ is the field of moduli. If $D$ descends to a divisor of $\PP^{1}$ over $k$, then by definition there is a morphism $\PP^{1}_{k}\to\cP$, hence the universal family $\cP$ has a rational point, which in turn implies that the compression $\bP$ has a rational point. Now assume that $\bP(k)$ is non-empty, we want to show that $D$ descends to a divisor of $\PP^{1}$ over $k$.
	
	Assume first that $k$ is finite. By \cite[Theorem 8.1]{diproietto-tonini-zhang}, $\cG(k)$ is non-empty, hence there exists a model $(P,D_{0})$ of $(\PP^{1}_{\bar{k}},D)$ over $k$. Since $k$ is finite, then $P=\PP^{1}_{k}$.
	
	Assume now that $k$ is infinite. As shown in the proof of \cite[Theorem 5.4]{giulio-angelo-moduli}, there exists an open subset $U\subset\cP$ which is a scheme. Since $k$ is infinite and $U(k)\subset\bP(k)$ is open, then $U(k)$ is non-empty, hence there exists a rational point $p\in \cP(k)$. The image of $p$ in $\cG(k)$ gives, by definition of $\cG$ and $\cP$, a model $(P,D_{0})$ of $(\PP^{1}_{\bar{k}},D)$ over $k$ such that $P(k)\neq\emptyset$.
\end{proof}

\begin{theorem}[{\cite[Theorem 1]{marinatto}}]\label{thm:marinatto1}
	Let $k$ be a field and $D\subset\PP_{\bar{k}}^{1}$ a reduced, effective divisor of $\PP^{1}_{\bar{k}}$ of degree $n\ge 3$. If $n$ is odd, then $D$ descends to a divisor of $\PP^{1}$ over the field of moduli. 
\end{theorem}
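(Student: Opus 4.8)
The plan is to reduce, via Lemma~\ref{lem:compoint}, to showing that the compression $\bP$ has a rational point over the field of moduli; the point is that for $n$ odd the non-triviality of $\bP$ is obstructed by a parity count. By base change I may assume that $k$ is itself the field of moduli, so that $\bP$ and the compressed divisor $\bD\subset\bP$ are defined over $k$, with $\bD$ finite étale over $\spec k$.

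First I would argue by contradiction, supposing that $\bP$ has no rational point, i.e. that $\bP$ is a non-trivial rational curve. Then the rational map $\bP\dashrightarrow\cG$ satisfies the hypotheses of Proposition~\ref{prop:cyclicgerbe}, so $G=\aut(\PP^1_{\bar k},D)$ is cyclic of order $d$ prime to $\cha k$, and there is a separable degree-$2$ closed point $p\in\bP$ such that $\bP\setminus\{p\}\to\cG$ is a morphism; thus $p$ is exactly the branch locus of the projection $\PP^1_{\bar k}\to\bP_{\bar k}$. Geometrically, $G$ acts as a cyclic rotation with exactly two fixed points, which map to the two geometric points of $p$, while every other orbit has size $d$. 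Since $\bP$ has no rational point, the two geometric points of $p$ are Galois-conjugate.

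Next I would run the parity count. Let $a$ be the number of fixed points of $G$ lying in $D$ and $b$ the number of free $G$-orbits contained in $D$, so that $n=a+bd$. The geometric points of $\bD$ are the $a$ branch points lying in $D$ together with the $b$ images of the free orbits, whence $\deg\bD=a+b$. Because $\bD$ and $p$ are both defined over $k$, the intersection $\bD\cap\{p\}$ is Galois-stable; as $p$ is a single closed point of degree $2$ on whose two geometric points Galois acts transitively, $\bD$ contains either both or neither of them, so $a\in\{0,2\}$ is even. On the other hand $\bP$ is a non-trivial Brauer--Severi curve, so every effective divisor on it has even degree; hence $a+b$ is even, and therefore $b$ is even as well. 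But then $n=a+bd$ is even, contradicting the hypothesis that $n$ is odd.

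Consequently $\bP$ has a rational point, and Lemma~\ref{lem:compoint} then gives that $D$ descends to a divisor of $\PP^1$ over the field of moduli. The only genuinely delicate step is the bookkeeping in the parity count, and within it the verification that $a$ is even: this is where it matters that $\bD$ and the branch locus $p$ are defined over $k$ even though $D$ itself need not be, so that one cannot simply quote invariance of $D$. Everything else is formal once Proposition~\ref{prop:cyclicgerbe} supplies the cyclic structure together with the degree-$2$ branch point, and this is precisely the advertised strategy of general techniques plus a parity counting.
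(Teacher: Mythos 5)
Your proposal follows the paper's strategy at the top level --- reduce via Lemma~\ref{lem:compoint} to producing a rational point on the compression, pass to the field of moduli, and show by a parity count that $\bP(k)=\emptyset$ forces $n$ to be even --- but the parity count itself is executed differently. The paper's count never uses Proposition~\ref{prop:cyclicgerbe}: it needs only that $\bD$ is finite étale over $k$, that every closed point of a non-trivial rational curve has even degree (so every Galois orbit in $\bD(k^{s})$ has even cardinality), and that the fiber cardinality of $D\to\bD_{\bar{k}}$ is constant along Galois orbits; summing fiber sizes over orbits gives that $n$ is even, with no information about $\aut(\PP^{1}_{\bar{k}},D)$ required. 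You instead extract from Proposition~\ref{prop:cyclicgerbe} the cyclic prime-to-$\cha k$ structure, the two fixed points and the degree-$2$ branch point, and count $n=a+bd$ with $a$ and $b$ both even. Your variant does have the merit of never needing the paper's preliminary reduction to $k$ infinite of characteristic $\neq 2$.

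However, one step of yours fails in a case you did not treat: $G=\aut(\PP^{1}_{\bar{k}},D)$ trivial, which Proposition~\ref{prop:cyclicgerbe} allows (the trivial group is cyclic of order prime to $\cha k$) and which is the generic case for odd $n\ge 5$. Then $G$ does not have ``exactly two fixed points'', the map $\PP^{1}_{\bar{k}}\to\bP_{\bar{k}}$ is an isomorphism with empty branch locus, $p$ is not a branch point, and the identification of $a$ with the number of geometric points of $\bD$ lying on $\{p,\bar{p}\}$ --- the crux of your count --- breaks down. The repair is one line: if $G$ is trivial, then $D\to\bD_{\bar{k}}$ is a bijection, so $n=\deg\bD$ is even because $\bP$ has no rational points, a contradiction. (Separately, note that the statement of Proposition~\ref{prop:cyclicgerbe} does not assert that $p$ is the branch locus; this is established inside its proof, or can be recovered for non-trivial $G$ by observing that the branch points must lie in the indeterminacy locus of $\bP\dashrightarrow\cG$, which is contained in the closed point underlying $p$.) With the trivial-automorphism case added, your proof is correct.
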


\begin{proof}
	Thanks to \cite[Proposition 4.1]{giulio-angelo-moduli} and Corollary~\ref{cor:cha2}, we may assume that $k$ is infinite with $\cha k\neq 2$. By base change, we can also assume that $k$ is the field of moduli of $(\PP_{\bar{k}}^{1},D)$. Let $\bP$ be the compression and $\bD\subset\bP$ the compressed divisor, we have that $\bD$ is a finite étale scheme over $k$.
	
	By Lemma~\ref{lem:compoint} it is enough to show that, if $\bP(k)=\emptyset$, then $n$ is even. If $p,q\in \bD(k^{s})=\bD(\bar{k})$ are in the same Galois orbit, then the fibers of $D\to \bD_{\bar{k}}=D/\aut(\PP^{1},D)$ over $p$ and $q$ have the same cardinality. Since $\bP$ is non-trivial and $\bD$ is étale over $k$, the Galois-orbits of $\bD(k^{s})$ have even cardinality. It follows that $D$ has even cardinality, too.
\end{proof}

\begin{proposition}[{\cite[Proposition 2.12]{marinatto}}]\label{prop:marinatto2}
	Let $k$ be a field and $D\subset\PP^{1}_{\bar{k}}$ an effective, reduced divisor of degree $4$. Then $D$ descends to a divisor of $\PP^{1}$ over the field of moduli.
\end{proposition}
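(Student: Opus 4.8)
The plan is to follow the template of the proofs of Theorem~\ref{thm:marinatto1} and Proposition~\ref{prop:deg6}, reducing the statement to a parity-and-orbit analysis on the compression. First I would apply \cite[Proposition 4.1]{giulio-angelo-moduli} together with Corollary~\ref{cor:cha2} to reduce to the case where $k$ is infinite and $\cha k\neq 2$, and then base change so that $k$ is the field of moduli of $(\PP^1_{\bar k},D)$. By Lemma~\ref{lem:compoint} it suffices to prove that the compression $\bP$ has a rational point, so I would argue by contradiction: assume $\bP(k)=\emptyset$. Then Theorem~\ref{thm:divisors} gives that $G:=\aut(\PP^1_{\bar k},D)$ is cyclic of even order, prime to $\cha k$, and that $\bP$ is a non-trivial rational curve.

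The next step is an orbit count on $\PP^1_{\bar k}$. Since $G$ is cyclic of order $2m$ prime to $\cha k$, its action on $\PP^1_{\bar k}$ has exactly two fixed points and all remaining orbits have size $2m$, exactly as recorded in the proof of Proposition~\ref{prop:deg6}. Writing $f\in\{0,1,2\}$ for the number of these fixed points lying on $D$ and $\ell$ for the number of free $G$-orbits contained in $D$, the equation $f+2m\ell=4$ leaves only the triples $(f,\ell,2m)\in\{(0,2,2),(0,1,4),(2,1,2)\}$; the case $f=1$ is impossible because $2m$ is even.

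Now I would bring in the parity of the compressed divisor, just as in the proof of Theorem~\ref{thm:marinatto1}: since $\bP$ is non-trivial and $\bD$ is étale over $k$, every Galois orbit of $\bD(\bar k)$ has even cardinality, so $|D/G|=|\bD(\bar k)|$ is even. As $|D/G|=f+\ell$, this rules out the triples $(0,1,4)$ and $(2,1,2)$, for which $|D/G|$ equals $1$ and $3$ respectively, leaving only $G\simeq\ZZ/2$ with $D$ a union of two free orbits of size $2$.

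In this final case I would produce an \emph{extra} automorphism, as in the proof of Proposition~\ref{prop:deg6}. Choosing coordinates so that the non-trivial element of $G$ is $x\mapsto -x$ with fixed points $0,\infty\notin D$, I may write $D=\{a,-a,b,-b\}$ with the four points distinct and nonzero. Then $x\mapsto ab/x$ preserves $D$ (it interchanges $a\leftrightarrow b$ and $-a\leftrightarrow -b$) and is not in $G$, since it swaps $0$ and $\infty$. Hence $\aut(\PP^1_{\bar k},D)$ contains the Klein four-group generated by $x\mapsto -x$ and $x\mapsto ab/x$, so it is not cyclic, contradicting Theorem~\ref{thm:divisors}. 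This contradiction forces $\bP(k)\neq\emptyset$, and Lemma~\ref{lem:compoint} then concludes that $D$ descends to a divisor of $\PP^1$ over the field of moduli. I expect the only delicate point to be the bookkeeping in the orbit count, namely correctly accounting for the fixed points and checking that the one surviving configuration genuinely admits the extra involution, rather than any conceptual obstacle.
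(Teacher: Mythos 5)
Your argument has a genuine gap at the step where you invoke Theorem~\ref{thm:divisors}. That theorem is an equivalence of three conditions, two of which are \emph{conjunctions}; from the single hypothesis $\bP(k)=\emptyset$ it does not let you conclude that $G=\aut(\PP^1_{\bar k},D)$ is cyclic of \emph{even} order prime to $\cha k$. (In the proof of Proposition~\ref{prop:deg6} the paper may legitimately use condition (3) of that theorem because there one assumes the negation of condition (1), ``not defined over the field of moduli''; you are proving the stronger statement that $D$ descends to a divisor of $\PP^1$, so your contradiction hypothesis is only $\bP(k)=\emptyset$, which is strictly weaker.) What you can legitimately deduce, via Corollary~\ref{cor:cyclic}, is only that $G$ is cyclic of order prime to $\cha k$ --- possibly of \emph{odd} order, possibly trivial. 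Your case analysis silently discards the odd orders, and your parity argument does not exclude them: for $|G|=1$ one gets $|\bD(\bar k)|=4$, and for $|G|=3$ (one fixed point of the action in $D$ plus one free orbit) one gets $|\bD(\bar k)|=2$, both even. The case $|G|=3$ could still be killed by the finer fiber-cardinality argument from the proof of Theorem~\ref{thm:marinatto1}, but the case $|G|=1$ cannot: to rule it out you must know that a reduced degree-$4$ divisor never has trivial (more generally, never has cyclic) automorphism group. That is exactly the classical fact the paper's proof consists of: the three involutions pairing off the four points generate a copy of $\ZZ/2\times\ZZ/2$ inside $\aut(\PP^1_{\bar k},D)$, so $\aut(\PP^1_{\bar k},D)$ is never cyclic, Proposition~\ref{prop:cyclicgerbe} (Corollary~\ref{cor:cyclic}) forces $\bP\simeq\PP^1$, and Lemma~\ref{lem:compoint} concludes --- in every characteristic, with no orbit count at all. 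Once you grant that fact, your entire case analysis becomes unnecessary; without it, your proof does not close.

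A secondary problem is the opening reduction. Corollary~\ref{cor:cha2} says that in characteristic $2$ the pair $(\PP^1_{\bar k},D)$ is defined over its field of moduli, i.e.\ descends to \emph{some} rational curve; it does not give the conclusion you are proving, namely that $D$ descends to a divisor of $\PP^1$. For odd $n$ (Theorem~\ref{thm:marinatto1}) this upgrade is automatic, because a divisor of odd degree on a conic forces the conic to be $\PP^1$; for $n=4$ there is no such parity upgrade, so characteristic $2$ is simply left untreated by your argument. Both this and the main gap are repaired by the same Klein-four-group observation, which is characteristic-free.
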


\begin{proof}
	For every such divisor, there is a copy of $\ZZ/2\times\ZZ/2$ inside $\aut(\PP^{1}_{\bar{k}},D)$, hence the compression has a rational point by Proposition~\ref{prop:cyclicgerbe}. We conclude by Lemma~\ref{lem:compoint}.
\end{proof}

\begin{theorem}[{\cite[Theorem 5.3]{huggins}}]
	Let $k$ be a field of characteristic $\neq 2$ and $H$ a hyperelliptic curve over $\bar{k}$ with hyperelliptic involution $\iota$. If $H$ is not defined over its field of moduli, then $\underaut(H)/<\iota>$ is cyclic of order prime to $\operatorname{char}k$.
\end{theorem}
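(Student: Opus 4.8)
The plan is to reduce the statement about the hyperelliptic curve $H$ to the statement about a divisor on $\PP^1$, and then invoke the machinery already developed (Corollary~\ref{cor:cyclic} in particular). A hyperelliptic curve $H$ over $\bar{k}$ of genus $g$ comes with its hyperelliptic involution $\iota$, whose quotient $H/\langle\iota\rangle$ is isomorphic to $\PP^1_{\bar{k}}$, and the $2g+2$ branch points of the double cover $H\to H/\langle\iota\rangle$ form a reduced, effective divisor $D\subset\PP^1_{\bar{k}}$ of degree $n=2g+2\ge 6$. Since $\cha k\neq 2$, the hyperelliptic involution is central and canonical, so $\underaut(H)$ acts on the pair $(\PP^1_{\bar{k}},D)$ through the quotient $\underaut(H)/\langle\iota\rangle$, and in fact there is a natural identification $\underaut(H)/\langle\iota\rangle\simeq\aut(\PP^1_{\bar{k}},D)$. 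My first step is therefore to make this reduction precise: the pair $(H,\iota)$ and the pair $(\PP^1_{\bar{k}},D)$ have the same field of moduli, because the quotient construction is canonical and Galois-equivariant, so it commutes with the descent data defining the field of moduli.

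Second, I would argue that $H$ is defined over its field of moduli if and only if $(\PP^1_{\bar{k}},D)$ is. One direction is clear: a model of $H$ over $k(H)$ yields a model of the quotient $(\PP^1,D)$ by functoriality of the quotient by the canonical involution. For the converse, given a model $(P,D_0)$ of $(\PP^1_{\bar{k}},D)$ over the field of moduli, one reconstructs $H$ as the double cover of $P$ branched over $D_0$; since $\cha k\neq 2$ and $D_0$ has even degree, the line bundle $\cO_P(\tfrac12 D_0)$ used to build this double cover descends (its isomorphism class being Galois-invariant, and $\pic$ of a genus-$0$ curve being $\ZZ$ hence without obstruction after the even-degree observation), so $H$ itself descends. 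This equivalence is the technical heart of the argument and the step I expect to cause the most trouble, since one must verify that the field of moduli is unchanged under passing between $H$ and its quotient, and that the square-root line bundle needed for the double cover presents no additional descent obstruction.

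Finally, with the reduction in hand, I would apply Corollary~\ref{cor:cyclic}. Assume $H$ is not defined over its field of moduli $k(H)$; by the equivalence just established, $(\PP^1_{\bar{k}},D)$ is not defined over its field of moduli either. By Corollary~\ref{cor:cyclic}, if the compression of $(\PP^1_{\bar{k}},D)$ is not isomorphic to $\PP^1$ over the field of moduli, then $\aut(\PP^1_{\bar{k}},D)$ is cyclic of order prime to $\cha k$. It remains to rule out the possibility that the compression is isomorphic to $\PP^1$: if it were, then by Lemma~\ref{lem:compoint} the divisor $D$ would descend to $\PP^1$ over the field of moduli, whence $(\PP^1_{\bar{k}},D)$ — and therefore $H$ — would be defined over its field of moduli, contrary to hypothesis. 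Thus the compression is non-trivial, Corollary~\ref{cor:cyclic} applies, and $\aut(\PP^1_{\bar{k}},D)\simeq\underaut(H)/\langle\iota\rangle$ is cyclic of order prime to $\cha k$, which is exactly Huggins' conclusion.
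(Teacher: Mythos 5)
Your overall strategy---pass to the branch divisor $D\subset\PP^{1}_{\bar{k}}=H/\langle\iota\rangle$, identify the two fields of moduli, and run a dichotomy on the compression using Corollary~\ref{cor:cyclic} and Lemma~\ref{lem:compoint}---is sound and close in spirit to the paper's proof. However, the statement you single out as ``the technical heart'', namely that $H$ is defined over its field of moduli \emph{if and only if} $(\PP^{1}_{\bar{k}},D)$ is, is false, and your justification of the ``if'' direction is exactly where it breaks. Given a model $(P,D_{0})$ over the field of moduli with $P$ a \emph{pointless} conic, the line bundle $\cO_{P}(\tfrac12 D_{0})$ need not exist: Galois-invariance of a line bundle class does not imply descent (the obstruction is the Brauer class of $P$), and in fact the degree map identifies $\pic(P)$ with $2\ZZ\subset\ZZ=\pic(P_{\bar{k}})$, so a bundle of degree $g+1$ exists on $P$ only when $g$ is odd. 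This is not a hypothetical failure: by Mestre's obstruction there are genus-$2$ curves $H$ over $\bar{\QQ}$ with $\underaut(H)=\langle\iota\rangle$ and field of moduli $\QQ$ which are not defined over $\QQ$, while their branch divisor has degree $6$ and trivial automorphism group, so $(\PP^{1}_{\bar{k}},D)$ \emph{is} defined over its field of moduli (its residual gerbe is $\spec\QQ$; alternatively invoke Proposition~\ref{prop:deg6}). Hence the implication you use in the form ``$H$ not defined over its field of moduli $\Rightarrow$ $(\PP^{1}_{\bar{k}},D)$ not defined over its field of moduli'' is wrong---and unavoidably so, since such examples are precisely what Huggins' theorem is about (the trivial group being cyclic of order $1$).

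Fortunately, your final dichotomy never needs the false equivalence, only one correct instance of it. If the compression is isomorphic to $\PP^{1}$ over the field of moduli, Lemma~\ref{lem:compoint} produces a model $(\PP^{1}_{k(D)},D_{0})$ on the \emph{trivial} conic, where $\cO(g+1)$ does exist; the double cover branched along $D_{0}$ is then defined over $k(D)$ and its base change to $\bar{k}$ is $H$ by uniqueness of the double cover of $\PP^{1}_{\bar{k}}$ branched along $D$ (using $\cha k\neq 2$), contradicting the hypothesis once you know $k(H)=k(D)$. If the compression is not $\PP^{1}$, Corollary~\ref{cor:cyclic} gives the conclusion directly; note it carries no nondefinability hypothesis, so your first reduction step (``$(\PP^{1}_{\bar{k}},D)$ is not defined over its field of moduli either'') can simply be deleted. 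Two further repairs are needed to match the paper's generality: your argument that $k(H)=k(D)$ is a Galois-equivariance sketch, which presupposes $k$ perfect, whereas the paper proves it for arbitrary $k$ by exhibiting $\cG_{H}\to\cG_{D}$ as a gerbe banded by $\mu_{2}$; and the paper bypasses the compression dichotomy entirely by observing that the rational map $\bP\dashrightarrow\cG_{H}$ together with $\cG_{H}(k)=\emptyset$ forces $\bP(k)=\emptyset$ for $k$ infinite, after which Corollary~\ref{cor:cyclic} applies at once. With these corrections your proof goes through.
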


\begin{proof}
	Let $D\subset\PP^{1}_{\bar{k}}=H/\iota$ be the branch divisor, we have $\aut(X)/<\iota>=\aut(\PP^{1}_{\bar{k}},D)$. Let $\cG_{H},\cP_{H},\cG_{D},\cP_{D}$ be the residue gerbes and the universal families of $H,D$ respectively cf. \cite[\S 3, \S 5]{giulio-angelo-moduli}, there is a cartesian diagram
	\[\begin{tikzcd}
		\cP_{H}\rar\dar		&	\cP_{D}\dar	\\
		\cG_{H}\rar			&	\cG_{D}
	\end{tikzcd}\]
	where the morphism $\cG_{H}\to\cG_{D}$ is an abelian gerbe banded by $\mu_{2}$. It follows that $(\PP^{1}_{\bar{k}},D)$ and $H$ have equal fields of moduli and equal compression $\bP$. Thanks to \cite[Proposition 4.1]{giulio-angelo-moduli}, we may assume that $k$ is infinite. Since $H$ is not defined over the field of moduli, i.e. $\cG_{H}(k)=\emptyset$, and there is a rational map $\bP\dashrightarrow\cG_{H}$ \cite[Proof of Theorem 5.4]{giulio-angelo-moduli}, we get that $\bP(k)=\emptyset$. We conclude by applying Corollary~\ref{cor:cyclic} to $\bP\dashrightarrow\cG_{D}$.
\end{proof}

\bibliographystyle{amsalpha}
\bibliography{main}

\end{document}